\theoremstyle{plain}
\newtheorem{theorem}{Theorem}[section]
\newtheorem{proposition}[theorem]{Proposition}
\newtheorem{lemma}[theorem]{Lemma}
\newtheorem{corollary}[theorem]{Corollary}
\newtheorem{algorithm}[theorem]{Algorithm}
\theoremstyle{definition}
\newtheorem{definition}[theorem]{Definition}
\theoremstyle{remark}
\newtheorem{remark}[theorem]{Remark}
\newlength{\algindent}\settowidth{\algindent}{\textit{Output}:\hskip0.5em }
\newlength{\alglabel}\settowidth{\alglabel}{\textit{Output}:}
\newcounter{stepcount}
\newenvironment{algtop}
{\quad\begin{list}{\arabic{stepcount}.}{\leftmargin=\algindent\labelwidth=\algindent\itemsep=\smallskipamount\usecounter{stepcount}}}
{\end{list}}
\newcommand{\algin}{\item[\emph{Input}\textup{:}]}
\newcommand{\algout}{\item[\emph{Output}\textup{:}]}
\newenvironment{alglist}
{\quad\begin{list}{\textup{\arabic{stepcount}}.}{\leftmargin=1.5em\labelwidth=1em\labelsep0.5em\itemsep=\smallskipamount\usecounter{stepcount}}}
{\end{list}}
\newcounter{substepcount}
\newenvironment{algsublist}
{\quad\begin{list}{\textup{(\/\rlap{\alph{substepcount}}\phantom{d}\/)}}{\usecounter{substepcount}}}
{\end{list}}
\newcommand{\mybar}[1]{
  \mathchoice
  {#1\llap{$\overline{\phantom{\displaystyle\rm#1}}$}}
  {#1\llap{$\overline{\phantom{\textstyle\rm#1}}$}}
  {#1\llap{$\overline{\phantom{\scriptstyle\rm#1}}$}}
  {#1\llap{$\overline{\phantom{\scriptscriptstyle\rm#1}}$}}
} 
\renewcommand{\bar}{\mybar}
\renewcommand{\hat}{\widehat}
\newcommand\lowtilde{\lower0.7ex\hbox{\textasciitilde}}
\newcommand{\ie}{\emph{i.e.},~}
\DeclareMathOperator{\Aut}{Aut}
\DeclareMathOperator{\Cl}{Cl}
\DeclareMathOperator{\End}{End}
\DeclareMathOperator{\Gal}{Gal}
\DeclareMathOperator{\GL}{GL}
\DeclareMathOperator{\Id}{Id}
\DeclareMathOperator{\Jac}{Jac}
\DeclareMathOperator{\Norm}{Norm}
\DeclareMathOperator{\Sp}{Sp}
\newcommand{\CC}{\mathbb{C}}
\newcommand{\NN}{\mathbb{N}}
\newcommand{\QQ}{\mathbb{Q}}
\newcommand{\QQbar}{\overline{\mathbb{Q}}}
\newcommand{\RR}{\mathbb{R}}
\newcommand{\ZZ}{\mathbb{Z}}
\newcommand{\xx}{\mathbf{x}}
\newcommand{\yy}{\mathbf{y}}
\newcommand{\Acal}{\mathcal{A}}
\newcommand{\Dcal}{\mathcal{D}}
\newcommand{\Hcal}{\mathcal{H}}
\newcommand{\Ical}{\mathcal{I}}
\newcommand{\Mcal}{\mathcal{M}}
\newcommand{\Ocal}{\mathcal{O}}
\newcommand{\Mb}{\mathbf{M}}
\newcommand{\Aa}{\mathfrak{A}}
\newcommand{\MM}{\mathfrak{M}}
\newcommand{\afrak}{\mathfrak{a}} 
\newcommand{\Adual}{\hat{A}}
\newcommand{\alphadual}{\hat{\alpha}}
\begin{document}

\title[Principally polarized squares of elliptic curves]{Principally polarized squares of elliptic curves\\ with field of moduli equal to $\QQ$}
\date{}

\author[A. G\'elin]{Alexandre G\'elin}
\address{Laboratoire de Math\'ematiques de Versailles,
         UVSQ, CNRS, Universit\'e Paris-Saclay,
         45 avenue des \'Etats-Unis, 
         78035 Versailles, France.}
\email{\href{mailto:alexandre.gelin@uvsq.fr}{alexandre.gelin@uvsq.fr}}
\urladdr{\url{https://alexgelin.github.io/}}
\thanks{This work was supported in part by a public grant as part of the \emph{Investissement d'avenir} project, reference ANR-11-LABX-0056-LMH, LabEx LMH}

\author[E.\,W. Howe]{Everett W. Howe}
\address{Center for Communications Research,
         4320 Westerra Court,
         San Diego, CA 92129-1967 U.S.A.}
\email{\href{mailto:however@alumni.caltech.edu}{however@alumni.caltech.edu}}
\urladdr{\href{http://alumnus.caltech.edu/~however}{http://alumnus.caltech.edu/\lowtilde{}however}}

\author[C. Ritzenthaler]{Christophe Ritzenthaler}
\address{IRMAR, 
         Universit\'e de Rennes 1,
         Campus de Beaulieu,
         263 avenue du G\'en\'eral Leclerc,
         35042 Rennes Cedex, France.}
\email{\href{mailto:christophe.ritzenthaler@univ-rennes1.fr}{christophe.ritzenthaler@univ-rennes1.fr}}
\urladdr{\url{https://perso.univ-rennes1.fr/christophe.ritzenthaler/}}

\subjclass[2010]{Primary 11G15; Secondary 14H25, 14H45}

\begin{abstract}
We give equations for $13$ genus-$2$ curves over~$\QQbar$, with models over~$\QQ$, whose unpolarized Jacobians are isomorphic to the square of an elliptic curve with complex multiplication by a maximal order. If the Generalized Riemann Hypothesis is true, there are no further examples of such curves. More generally, we prove under the Generalized Riemann Hypothesis that there exist exactly $46$ genus-$2$ curves over $\QQbar$ with field of moduli $\QQ$ whose Jacobians are isomorphic to the square of an elliptic curve with complex multiplication by a maximal order.
\end{abstract}

\maketitle

\section{Introduction} \label{sec:intro}

For $g>1$, let $\MM_g$ (resp.~$\Aa_g$) be the moduli space classifying absolutely irreducible projective smooth curves of genus~$g$ (resp.~principally polarized abelian varieties of dimension~$g$) over $\QQbar$. These spaces are quasi-projective varieties defined over~$\QQ$, linked by the Torelli map, which associates to a curve its Jacobian. To explain the modular interpretation of rational points on these spaces, we must define the terms \emph{field of definition} and \emph{field of moduli}. If $X$ is a curve or polarized abelian variety over $\QQbar$, we say that a field $F \subseteq \QQbar$ is a \emph{field of definition} of $X$ if there exists a variety~$X_0/F$ --- called a \emph{model} of $X$ over $F$ --- such that $X_0 \simeq_{\QQbar} X$. Since $\QQbar$ is a field of characteristic~$0$, by~\cite[Corollary~3.2.2, p.~54]{Koi72} we can define the \emph{field of moduli} of $X$ to be either
\begin{itemize}
\item the field fixed by the subgroup $\{\sigma \in \Gal(\QQbar/\QQ) ~|~ X \simeq X^{\sigma}\}$, or
\item the intersection of the fields of definition of $X$.
\end{itemize}
With these terms defined, we can say that the rational points on $\MM_g$ (resp.~$\Aa_g$) correspond to the isomorphism classes of curves (resp.~principally polarized abelian varieties) over $\QQbar$ that have field of moduli~$\QQ$~\cite{Bai62}.

There are a number of interesting sets of rational points on $\Aa_g$, but the complex multiplication (CM) abelian varieties --- that is, the principally polarized abelian varieties having endomorphism rings containing an order in a number field of degree~$2g$ over~$\QQ$ --- have attracted the most interest. When such a point on~$\Aa_g$ lies in the image of $\MM_g$, the corresponding curve is called a \emph{CM-curve}. For $g=2$, the set of \emph{simple} CM-abelian varieties with field of moduli $\QQ$ is known, and for those varieties that are Jacobians explicit equations have been computed for the corresponding curves~\cite{Spa94,Wam99,MU01,KS15,BS17}; for $g=3$ the similar set of possible CM maximal orders is determined in~\cite{Kil16} and conjectural equations for the curves are given in~\cite{Wen01,KW05,BILV16,LS16,KLL+17}. (And while we have avoided the case $g=1$ in the discussion above for technical reasons, it is still of course true that the CM-elliptic curves with rational $j$-invariants are known as well~\cite[Appendix~A.3]{Sil94}.)

In this article we consider genus-$2$ curves whose Jacobians are non-simple CM-abelian surfaces. Every such surface is isogenous to the square of a CM-elliptic curve, but we restrict our attention in two ways: first, we look only at surfaces that are \emph{isomorphic} (and not just isogenous) to $E^2$ for a CM-elliptic curve~$E$, and second, we only consider $E$ that have CM by a maximal order. The second restriction is not essential to our methods, and we impose it here in order to simplify some of our calculations. Note that if the elliptic curve $E$ has no CM --- \ie $\End(E)\simeq\ZZ$, then $E^2$ cannot be isomorphic to the Jacobian of a genus-$2$ curve, because $E^2$ has no indecomposable principal polarizations~\cite[Corollary~4.2, p.~159]{Lan05}.

\subsection*{Main Contributions.} 
We prove under the Generalized Riemann Hypothesis that there exist exactly $46$ genus-$2$ curves over $\QQbar$ with field of moduli $\QQ$ whose Jacobians are isomorphic to the square of an elliptic curve with CM by a maximal order. We show that among these $46$ curves exactly $13$ can be defined over $\QQ$, and we give explicit equations for them. In order to accomplish this, we develop an algorithm to compute, for an imaginary quadratic maximal order $\Ocal$, canonical forms for all positive definite unimodular Hermitian forms on $\Ocal\times\Ocal$. Such Hermitian forms correspond to principal polarizations $\varphi$ on $E^2$, and our algorithm computes the automorphism group of the polarized variety $(E^2,\varphi)$ and identifies the polarizations that come from genus-$2$ curves.

\subsection*{Related work.}
Hayashida and Nishi~\cite{HN65} consider in particular when a product of two elliptic curves, with CM by the same maximal order~$\Ocal$, is the Jacobian of a curve over~$\CC$, and they find that this happens if and only if the discriminant of~$\Ocal$ is different from $-1$, $-3$, $-7$, and $-15$. Hayashida~\cite{Hay68} gives the number of indecomposable principal polarizations on $E^2$ where $E/\CC$ is an elliptic curve with CM by a maximal order. More recently, Kani~\cite{Kan14,Kan16} gives existence results on Jacobians isomorphic to the product of two elliptic curves with control on the polarization, and Schuster~\cite{Sch90} and Lange~\cite{Lan05} study generalizations to higher dimensions. Rodriguez-Villegas~\cite{Rod00} considers the same situation as Hayashida and Nishi, and in the case where $\Ocal$ has class number~$1$ and odd discriminant, he gives an algorithm (relying on quaternion algebras) for producing curves with field of moduli~$\QQ$. Note finally that Fit\'e and Guitart~\cite{FG18} determine when there exists an abelian surface $A/\QQ$ that is $\QQbar$-isogenous to $E^2$, with $E/\QQbar$ a~CM-curve.

\subsection*{Outline.}
Our article proceeds as follows. Torelli's theorem (see \cite[Appendix]{LS01}) implies that our genus-$2$ curve $C$ has field of moduli $\QQ$ if and only if its principally polarized Jacobian $(E^2,\varphi)$ has field of moduli~$\QQ$. We therefore need to find all elliptic curves $E$ with CM by a maximal order $\Ocal$ and all polarizations $\varphi$ of $E^2$ such that $(E^2,\varphi)$ is isomorphic to all of its $\Gal(\QQbar/\QQ)$-conjugates. Proposition~\ref{prop:exp2} shows that if $E^2$ is isomorphic to all of its Galois conjugates --- even just as an abelian variety without polarization --- then the class group of $\Ocal$ has exponent at most~$2$. Under the Generalized Riemann Hypothesis, this gives us an explicit finite list of possible orders (Table~\ref{tab:order}). For each of these orders~$\Ocal$, one can identify the indecomposable principal polarizations $\varphi$ on $E^2$ and describe them as certain~\mbox{$2$-by-$2$} matrices $M$ with coefficients in $\Ocal$ (Proposition~\ref{prop:matM}). Tables of such matrices were computed by Hoffmann~\cite{Hof91} and Schiemann~\cite{Sch98} and were published online,\footnote{Available at \url{https://www.math.uni-sb.de/ag/schulze/Hermitian-lattices/}.} but they only include a fraction of the discriminants that we must consider. We therefore describe an algorithm, using a method different from that of Hoffmann and Schiemann, that we use to recompute these tables of matrices (Section~\ref{sec:HowPol}). Given such a matrix~$M$, we find explicit algebraic conditions on~$M$ for the principally polarized abelian surface~$(E^2,\varphi)$ to have field of moduli~$\QQ$ (Section~\ref{sec:cond}). We check whether these conditions are satisfied for each $M$ on our list.

We conclude the article with three more results: we heuristically compute the Cardona--Quer invariants~\cite{CQ05} of the associated curves~$C$ and see that the factorization of their denominators reveals interesting patterns; we show that the field of moduli is a field of definition if and only if $C$ has a non-trivial group of automorphisms (\ie of order greater than~$2$, see Section~\ref{prop:fielddef}); and for the curves $C$ defined over~$\QQ$, we compute equations and prove that they are correct.

\subsection*{Notation.}
In the following, $E$ is an elliptic curve over~$\QQbar$ with complex multiplication by a maximal order $\Ocal$ of discriminant $\Delta$ and with fraction field~$K$, which we sometimes call the \emph{CM-field}.

\section{Condition on \texorpdfstring{$E^2$}{E2}} \label{sec:woPol}

We are interested in the field of moduli $\Mb$ of a principally polarized abelian surface $(E^2,\varphi)$. As outlined above, we first consider the abelian surface $E^2$ alone and we give a necessary condition for $\Mb$ to be contained in the CM-field $K$. If~$\Mb\subseteq K$ then in particular we have $E^2\simeq(E^\sigma)^2$ for all $\sigma \in \Gal\left(\QQbar/K\right)$. The class group~$\Cl(\Ocal)$ acts simply transitively on the set of elliptic curves with CM by~$\Ocal$~\cite[Proposition~1.2, p.~99]{Sil94}. Since $\End(E^{\sigma})=\End(E)=\Ocal$, for each \mbox{$\sigma \in \Gal(\QQbar/K)$}, there exists a unique class of ideals $I_\sigma \in \Cl (\Ocal)$ such that~\mbox{$E^\sigma \simeq E / I_\sigma$}.

Using a result of Kani~\cite[Proposition~65, p.~335]{Kan11}, we get that, for $E$, $\sigma$ and~$I_\sigma$ defined as above, \[E^2 \simeq (E / I_\sigma)^2 \quad\Longleftrightarrow\quad I_\sigma^2 = \left[\Ocal\right],\] where the last equality is in $\Cl(\Ocal)$. Note that since we only work with maximal orders, the conditions on the conductors in Kani's result are trivially satisfied. Moreover by~\cite[Theorem~4.3, p.~122]{Sil94}, since for any $I \in \Cl(\Ocal)$ there exists $\sigma \in \Gal(\QQbar/K)$ (actually even in $\Gal\left(K\big(j(E)\big)/K\right)$) such that $E/I=E^{\sigma}$, we get the following proposition.

\begin{proposition} \label{prop:exp2}
A necessary condition for $\Mb \subseteq K$ is that the class group of $\Ocal$ has exponent at most $2$.
\end{proposition}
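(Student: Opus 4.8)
The plan is to convert the hypothesis $\Mb \subseteq K$ into an arithmetic condition on $\Cl(\Ocal)$ and then read off the exponent bound. First I would unwind the definition of the field of moduli: since $\Mb$ is the fixed field of $\{\sigma \in \Gal(\QQbar/\QQ) : (E^2,\varphi) \simeq (E^2,\varphi)^\sigma\}$, the inclusion $\Mb \subseteq K$ forces $\Gal(\QQbar/K)$ to lie inside this stabilizer. Thus $(E^2,\varphi) \simeq (E^2,\varphi)^\sigma$ for every $\sigma \in \Gal(\QQbar/K)$, and after discarding the polarization I obtain the unpolarized isomorphisms $E^2 \simeq (E^\sigma)^2$ that the text has already isolated.

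Next I would invoke the simply transitive action of $\Cl(\Ocal)$: because $\End(E^\sigma)=\Ocal$, each $E^\sigma$ is of the form $E/I_\sigma$ for a unique class $I_\sigma \in \Cl(\Ocal)$, and Kani's cited equivalence turns $E^2 \simeq (E/I_\sigma)^2$ into $I_\sigma^2 = [\Ocal]$. At this point the hypothesis $\Mb \subseteq K$ has given me $I_\sigma^2 = [\Ocal]$ for every $\sigma \in \Gal(\QQbar/K)$.

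The decisive step is then the surjectivity supplied by Silverman's theorem: every class $I \in \Cl(\Ocal)$ occurs as $E/I = E^\sigma$ for some $\sigma \in \Gal(\QQbar/K)$, so the family $\{I_\sigma\}$ exhausts the whole class group rather than just a proper subgroup. Combining this with the relation above yields $I^2 = [\Ocal]$ for every $I \in \Cl(\Ocal)$, \ie $\Cl(\Ocal)$ has exponent at most $2$, as claimed.

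Since Kani's equivalence and Silverman's surjectivity are both quoted as black boxes, the argument is essentially an assembly and I foresee no serious obstacle. The one point demanding care is precisely this surjectivity: it is what promotes the pointwise vanishing $I_\sigma^2=[\Ocal]$ to a statement about all of $\Cl(\Ocal)$, and without it one would only learn that the image of $\sigma \mapsto I_\sigma$ consists of $2$-torsion classes.
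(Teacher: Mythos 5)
Your proposal is correct and follows essentially the same route as the paper: pass from $\Mb\subseteq K$ to the unpolarized isomorphisms $E^2\simeq(E^\sigma)^2$, apply Kani's equivalence to get $I_\sigma^2=[\Ocal]$, and use Silverman's surjectivity of $\sigma\mapsto I_\sigma$ to conclude that every class is $2$-torsion. You correctly identify the surjectivity as the decisive step, exactly as the paper does.
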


Louboutin~\cite{Lou90} shows that under the assumption of the Generalized Riemann Hypothesis, the discriminant $\Delta$ of an imaginary quadratic field whose class group is of exponent at most $2$ satisfies $|\Delta|\le 2\cdot 10^7$. In Table~\ref{tab:order} we list the $65$ fundamental discriminants satisfying this bound that give class groups of exponent at most~$2$.

{\small
\begin{table}[ht]
\centering
\begin{tabular}{cl}
\toprule
$\# \Cl(\Ocal)$ & \multicolumn{1}{c}{Discriminants $\Delta$} \\
\midrule
  $2^0$ &    $-3$,    $-4$,    $-7$,    $-8$,   $-11$,   $-19$,   $-43$,   $-67$, $-163$          \\[1ex]
  $2^1$ &   $-15$,   $-20$,   $-24$,   $-35$,   $-40$,   $-51$,   $-52$,   $-88$,  $-91$, $-115$, \\
        &  $-123$,  $-148$,  $-187$,  $-232$,  $-235$,  $-267$,  $-403$,  $-427$  \\[1ex]
  $2^2$ &   $-84$,  $-120$,  $-132$,  $-168$,  $-195$,  $-228$,  $-280$,  $-312$, \\
        &  $-340$,  $-372$,  $-408$,  $-435$,  $-483$,  $-520$,  $-532$,  $-555$, \\
        &  $-595$,  $-627$,  $-708$,  $-715$,  $-760$,  $-795$, $-1012$, $-1435$  \\[1ex]
  $2^3$ &  $-420$,  $-660$,  $-840$, $-1092$, $-1155$, $-1320$, $-1380$,          \\ 
        & $-1428$, $-1540$, $-1848$, $-1995$, $-3003$, $-3315$                    \\[1ex]
  $2^4$ & $-5460$ \\
\bottomrule
\end{tabular}
\bigskip
\caption{Discriminants $\Delta$ of the imaginary quadratic maximal orders $\Ocal$ of exponent at most~$2$, conditional on the Generalized Riemann Hypothesis.} \vspace{-5.5ex}
\label{tab:order}
\end{table}}

\section{Polarized abelian surfaces} \label{sec:Pol}

\subsection{Polarizations on the square of an elliptic curve}

We now consider the principal polarizations on the product surface $A = E^2$. A principal polarization on $A$ is, in particular, an isogeny of degree $1$ from $A$ to the dual $\Adual$ of $A$, but not every isomorphism $A\to\Adual$ is a principal polarization; other properties must be satisfied as well (see~\cite[\S~4.1]{BL04}). One such polarization is the product polarization $\varphi_0 = \varphi_E \times \varphi_E$. Given any other principal polarization $\varphi$, we can consider the automorphism $M = \varphi_0^{-1}\varphi$ of $A$, which (in light of the isomorphism $A = E^2$) we view as a matrix\footnote{All matrices in this paper act on the left.} in $\GL_2(\Ocal)$. Our first result characterizes the matrices that arise in this way; the statement is not new, but we provide a proof here because it introduces some of the ideas used in the sequel. (Recall~\cite[Exercise~7, p.~134]{Hal74} that two matrices $M_1$ and~$M_2$ in~$\GL_2(\Ocal)$ are said to be \emph{congruent} if there exists a matrix $P \in \GL_2(\Ocal)$ such that~$P^* M_1 P = M_2$, where $P^*$ is the conjugate transpose of~$P$.)

\begin{proposition} \label{prop:matM}
The map $M \mapsto \varphi_0 \cdot M$ defines a bijection between the positive definite unimodular Hermitian matrices with coefficients in $\Ocal$ and the principal polarizations on~$A$. Two principal polarizations are isomorphic to one another if and only if their associated matrices are congruent to one another.
\end{proposition}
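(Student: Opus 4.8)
The plan is to set up the correspondence using the standard dictionary between principal polarizations on an abelian variety $A$ and ample line bundles, translated through the fixed reference polarization $\varphi_0 = \varphi_E\times\varphi_E$. Since any principal polarization $\varphi$ is an isomorphism $A\to\Adual$, and $\varphi_0$ is one too, the quotient $M=\varphi_0^{-1}\varphi$ is an automorphism of $A$, hence an element of $\GL_2(\Ocal)$ once we use $A=E^2$ to identify $\End(A)$ with the ring of $2$-by-$2$ matrices over $\Ocal=\End(E)$. The first task is to characterize which $M$ arise this way. The key input is the Rosati involution associated to $\varphi_0$: concretely, the Rosati involution sends a matrix to its conjugate transpose $M\mapsto M^*$, where the bar is complex conjugation on $\Ocal$ (the Rosati involution on $\End(E)$ for a principal polarization on an elliptic curve with CM is exactly complex conjugation). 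The condition that $\varphi=\varphi_0 M$ again be a polarization, i.e. equal to its own dual $\hat\varphi$, translates into the symmetry condition $M^*=M$, that is, $M$ is Hermitian. That $\varphi$ be a \emph{principal} polarization (an isomorphism, degree $1$) translates into $M\in\GL_2(\Ocal)$, i.e. unimodularity of the Hermitian form. Finally, the requirement that $\varphi$ be a genuine polarization and not merely a symmetric isogeny — that it correspond to an ample line bundle — is exactly positive definiteness of the associated Hermitian form, via the positivity of the Rosati involution (the Rosati form $x\mapsto \mathrm{tr}(\varphi_0^{-1}\varphi\,x\,\hat x)$ being positive definite). Assembling these three equivalences gives the bijection between positive definite unimodular Hermitian matrices and principal polarizations, and the inverse map $M\mapsto \varphi_0\cdot M$ is manifestly the claimed one.

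For the second statement, the plan is to unwind what it means for two principal polarizations $\varphi_1=\varphi_0 M_1$ and $\varphi_2=\varphi_0 M_2$ to be isomorphic. An isomorphism of polarized abelian varieties $(A,\varphi_1)\to(A,\varphi_2)$ is an automorphism $P$ of $A$ such that the pullback condition holds, which in the language of duals reads $\hat P\,\varphi_2\,P=\varphi_1$, where $\hat P$ is the dual of $P$. Here $P\in\GL_2(\Ocal)$ and, under our identification, the dual $\hat P$ corresponds to $\varphi_0^{-1}\hat P\varphi_0$ acting appropriately; using that the Rosati involution for $\varphi_0$ is conjugate transpose, the compatibility condition becomes $P^*M_2P=M_1$ after substituting $\varphi_i=\varphi_0 M_i$ and cancelling $\varphi_0$. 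This is precisely the congruence relation defined in the statement. Conversely, any $P\in\GL_2(\Ocal)$ with $P^*M_2P=M_1$ produces such an isomorphism by reversing the computation, so isomorphism of the polarizations is equivalent to congruence of the matrices.

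The main obstacle, and the step requiring the most care, is the precise bookkeeping of duals and the Rosati involution: one must verify that under the identification $A=E^2$ and $\End(A)\cong M_2(\Ocal)$, the Rosati involution associated to the product polarization $\varphi_0$ really is the conjugate transpose, and that the condition for a symmetric isogeny $A\to\Adual$ to be a polarization (ampleness) coincides with positive definiteness of the Hermitian form. This rests on the analytic description over $\CC$, where $E\cong\CC/\Ocal$, the principal polarization $\varphi_E$ corresponds to the standard Hermitian form $(z,w)\mapsto z\bar w/(\mathrm{Im}\,\tau)$-type pairing on $\Ocal$, and the Appell--Humbert / Lefschetz dictionary identifies principal polarizations with positive definite unimodular Hermitian forms on the period lattice $\Ocal\times\Ocal$. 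I would carry out this identification explicitly, matching the sign and conjugation conventions, since a sign error here would exchange positive and negative definiteness; after that the equivalences and the congruence computation are formal.
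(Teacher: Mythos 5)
Your proposal is correct and follows essentially the same route as the paper: identify the Rosati involution of $\varphi_0$ with conjugate transpose, so that symmetry, positivity, and degree~$1$ translate into Hermitian, positive definite, and unimodular respectively, and then unwind the isomorphism condition $\hat\alpha\varphi_1\alpha=\varphi_2$ into the congruence $P^*M_1P=M_2$. The only difference is that you propose re-deriving the characterization of polarizations from the Appell--Humbert dictionary, where the paper simply cites~\cite[Theorem~5.2.4]{BL04} for the same fact.
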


\begin{proof}
By~\cite[Theorem~5.2.4, p.~121]{BL04}, the matrices $M$ corresponding to principal polarizations are totally positive symmetric endomorphisms of norm~$1$. Here the symmetry is with respect to the Rosati involution of $\End(A)$ associated to the polarization $\varphi_0$, which is the conjugate-transpose involution under the identification $\End(A) = M_2(\Ocal)$. Thus, the matrices $M$ corresponding to principal polarizations are exactly the positive definite unimodular Hermitian matrices.

Let $\varphi_1$ and $\varphi_2$ be two principal polarizations on $A$, corresponding to matrices~$M_1$ and $M_2$. The polarizations $\varphi_1$ and $\varphi_2$ are isomorphic to one another if and only if there exists an automorphism $\alpha\colon A\to A$ such that $\alphadual\varphi_1\alpha = \varphi_2$, where $\alphadual\colon\Adual\to\Adual$ is the dual of $\alpha$. This last condition is equivalent to
$(\varphi_0^{-1}\alphadual\varphi_0) (\varphi_0^{-1}\varphi_1\alpha) = \varphi_0^{-1}\varphi_2$. Now, $\varphi_0^{-1} \alphadual\varphi_0$ is nothing other than the Rosati involute of $\alpha$, so if we write $\alpha$ as a matrix $P \in \GL_2(\Ocal)$, the condition that determines whether $\varphi_1$ and $\varphi_2$ are isomorphic is simply $P^* M_1 P = M_2$.
\end{proof}

The principal polarizations on $A$ come in two essentially different types.

\begin{definition}
A polarization $\varphi$ on an abelian variety $A$ over a field $k$ is said to be \emph{geometrically decomposable} if there exist two abelian varieties $A_1$ and $A_2$ over $\bar{k}$ of positive dimension, together with polarizations $\varphi_1$ and $\varphi_2$, such that $(A,\varphi)$ and $(A_1 \times A_2, \varphi_1 \times \varphi_2)$ are isomorphic over $\bar{k}$. A polarization that is not geometrically decomposable is \emph{geometrically indecomposable}. For brevity's sake, in this paper we drop the adjective \emph{geometrically} and simply use the terms \emph{decomposable} and \emph{indecomposable} for these concepts.
\end{definition}

Results in~\cite{Wei57,Hoy63,OU73} show that a principally polarized abelian surface is the Jacobian of a curve if and only if the polarization is indecomposable. In the remainder of this section we show how we can easily compute representatives for the congruence classes of matrices representing the decomposable polarizations on~$E^2$; we focus on the indecomposable polarizations in later sections.

\begin{proposition} \label{prop:decomposable}
If $\varphi$ is a decomposable polarization on $E^2$, then there exist elliptic curves $F$ and $F'$ that have CM by $\Ocal$ such that $\varphi$ is the pullback to $E^2$ of the product polarization on $F\times F'$ via some isomorphism $E^2 \simeq F\times F'$. The pair~$(F,F')$ giving rise to a given decomposable polarization is unique up to interchanging $F$ and~$F'$ and up to isomorphism for each elliptic curve. Moreover, for every~$F$ with CM by $\Ocal$ there exists an $F'$ with CM by $\Ocal$ such that $E^2 \simeq F\times F'$.
\end{proposition}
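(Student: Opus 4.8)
The plan is to read the two factors directly off the definition of decomposability and then pin down their endomorphism rings. Since $\dim E^2 = 2$, in any isomorphism $(E^2,\varphi)\simeq(A_1\times A_2,\varphi_1\times\varphi_2)$ over $\QQbar$ with $A_1,A_2$ of positive dimension, both factors are elliptic curves, say $F$ and $F'$. As an elliptic curve carries a unique principal polarization, $\varphi_1$ and $\varphi_2$ must be the canonical ones $\varphi_F$ and $\varphi_{F'}$, so $\varphi$ is the pullback of $\varphi_F\times\varphi_{F'}$ along the chosen isomorphism $E^2\simeq F\times F'$. To see that $F$ and $F'$ have CM by the maximal order $\Ocal$, I would use the projection idempotents: the decomposition $E^2=F\times F'$ produces an idempotent $e\in\End(E^2)=M_2(\Ocal)$ whose image is $F$, and whose image on $\Ocal^2$ is a rank-one projective direct summand $P$. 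Then $\End(F)\cong e\,M_2(\Ocal)\,e\cong\End_\Ocal(P)$, and since a rank-one projective module over the Dedekind domain $\Ocal$ has endomorphism ring $\Ocal$, we obtain $\End(F)\cong\Ocal$; the same applies to $F'$. (Both factors are automatically isogenous to $E$, consistent with Poincar\'e reducibility.)

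For the final (``moreover'') assertion I would invoke the standard dictionary between finitely generated projective $\Ocal$-modules and abelian varieties isogenous to a power of $E$, under which $\Ocal^2\leftrightarrow E^2$, a fractional ideal $\mathfrak{a}\leftrightarrow$ an elliptic curve $F_{\mathfrak{a}}$ with CM by $\Ocal$, and direct sums correspond to products. Given $F$ with CM by $\Ocal$, write $F=F_{\mathfrak{a}}$ and take $F'=F_{\mathfrak{b}}$ with $[\mathfrak{b}]=[\mathfrak{a}]^{-1}$ in $\Cl(\Ocal)$. By Steinitz's theorem $\mathfrak{a}\oplus\mathfrak{b}\cong\Ocal\oplus\mathfrak{a}\mathfrak{b}\cong\Ocal^{2}$, since $[\mathfrak{a}\mathfrak{b}]=[\Ocal]$; translating back gives $F\times F'\simeq E^2$.

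The substantive step is uniqueness. Using Proposition~\ref{prop:matM} I would restate everything in terms of Hermitian lattices: $\varphi$ corresponds to the positive definite unimodular Hermitian lattice $(\Ocal^2,M)$, and a decomposition of $(E^2,\varphi)$ as a product of polarized elliptic curves corresponds to an orthogonal splitting $(\Ocal^2,M)=L_1\perp L_2$ into rank-one unimodular Hermitian lattices, where $L_i$ is the lattice attached to $(F_i,\varphi_{F_i})$. Because the principal polarization on an elliptic curve is unique, the isometry class of $L_i$ determines the isomorphism class of $F_i$. Thus uniqueness of the unordered pair $\{[F],[F']\}$ is exactly the uniqueness of the orthogonal decomposition of a positive definite Hermitian lattice into indecomposable summands (here, rank-one summands), up to isometry and permutation.

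I expect this last point to be the main obstacle, and I would handle it by the classical canonical construction for definite lattices: call a nonzero $x\in\Ocal^2$ \emph{reducible} if $x=y+z$ with $y,z$ nonzero and $y\perp z$, and \emph{irreducible} otherwise; the relation generated by ``two irreducibles with nonzero inner product'' partitions the irreducibles into blocks, each block spans an orthogonal indecomposable summand, and these summands are intrinsic to $(\Ocal^2,M)$. This produces the decomposition and its uniqueness simultaneously (the Hermitian analogue of the Krull--Schmidt/Eichler uniqueness theorem for positive definite lattices). Everything else is unwinding the definition of decomposability together with the module/Hermitian dictionary and Steinitz's theorem.
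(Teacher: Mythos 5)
Your argument is correct, but it takes a genuinely different route from the paper's at each of the three steps, most substantially for uniqueness. For the fact that $F$ and $F'$ have CM by $\Ocal$, the paper compares centers: the center of $\End(E^2)$ is $\Ocal$ while that of $\End(F\times F')$ is $\End(F)\cap\End(F')$, and maximality of $\Ocal$ forces both endomorphism rings to equal $\Ocal$; your idempotent computation $\End(F)\cong e\,M_2(\Ocal)\,e\cong\End_\Ocal(e\Ocal^2)\cong\Ocal$ reaches the same conclusion through the module-theoretic dictionary. For the existence of $F'$, the paper simply cites Kani's criterion ($E^2\simeq F\times F'$ if and only if the corresponding ideal classes are inverse to one another), whereas you reprove the relevant direction via Steinitz's theorem; these amount to the same fact. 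The real divergence is uniqueness: the paper gives a short, self-contained geometric argument --- the pullback of the product polarization on $F\times F'$ along an embedding $(\alpha,\beta)\colon G\to F\times F'$ is $\hat\alpha\alpha+\hat\beta\beta=\deg(\alpha)+\deg(\beta)$, so the only elliptic curves admitting an embedding into $(E^2,\varphi)$ with principal pullback are $F$ and $F'$ themselves, which characterizes the unordered pair intrinsically in terms of $(E^2,\varphi)$. You instead translate everything into Hermitian lattices and invoke the Eichler--Kneser uniqueness of the orthogonal splitting of a definite lattice into indecomposables via the irreducible-vector argument. That does work (the argument goes through verbatim for positive definite Hermitian $\Ocal$-lattices, since norms of nonzero vectors are positive integers), and it is the ``right'' general statement --- it would also handle higher powers of $E$ --- but it requires extending the dictionary of Proposition~\ref{prop:matM} to non-free rank-one orthogonal summands and imports a classical theorem where the paper needs only a two-line degree computation. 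Your approach buys generality and a cleaner conceptual framework; the paper's buys brevity and self-containment.
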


\begin{proof}
First we note that by definition, if $\varphi$ is a decomposable polarization on $E^2$ there must exist elliptic curves $F$ and $F'$, isogenous to $E$, such that $\varphi$ is the pullback of the product polarization on $F\times F'$ under some isomorphism $E^2\simeq F\times F'$. Now, the center of $\End(E^2)$ is $\End(E) = \Ocal$, while the center of $\End(F\times F')$ is~$\End(F)\cap\End(F')$; since $\Ocal$ is a maximal order, $F$ and $F'$ both have CM by~$\Ocal$.

If $(\alpha,\beta)\colon G\to F\times F'$ is an embedding of an elliptic curve $G$ into $F\times F'$, then the pullback of the product polarization to $G$ is the morphism
\[\begin{bmatrix} \hat\alpha & \hat\beta\end{bmatrix} \begin{bmatrix} 1& 0 \\ 0 & 1\end{bmatrix} \begin{bmatrix} \alpha \\ \beta\end{bmatrix}
 = \hat\alpha \alpha + \hat\beta \beta = \deg(\alpha) + \deg(\beta);\]
that is, the pullback is the multiplication-by-$d$ map, with ${d = \deg(\alpha) + \deg(\beta)}$. It follows that if $\varphi$ is the pullback to $E^2$ of the product polarization on $F\times F'$ via some isomorphism $E^2 \simeq F\times F'$, then the set of elliptic curves~$G$ for which there exists an embedding $\epsilon\colon G\to E^2$ such that $\epsilon^*\varphi$ is a principal polarization is simply~$\{F,F'\}$. Thus, for a given decomposable principal polarization, the pair $(F,F')$ is unique up to order and isomorphism. 

As we noted at the beginning of Section~\ref{sec:woPol}, the set of elliptic curves with CM by~$\Ocal$ is a principal homogenous space for the class group of $\Ocal$. Given an $F$ with CM by $\Ocal$, let~$I\in\Cl(\Ocal)$ be the ideal class that takes $E$ to $F$. If $F'$ is an elliptic curve with CM by $\Ocal$, say corresponding to an ideal class $I'\in\Cl(\Ocal)$, then $E^2 \simeq F\times F'$ if and only if $I'$ is the inverse of $I$ (see~\cite[Proposition~65, p.~335]{Kan11}). This proves the final statement of the proposition.
\end{proof}

\begin{corollary} \label{cor:decomposable}
Let $h$ denote the class number of $\Ocal$, and let $t$ denote the size of the~$2$-torsion subgroup of the class group. The number of decomposable polarizations on $E^2$ is equal to $(h+t)/2$. 
\end{corollary}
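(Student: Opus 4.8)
The plan is to count decomposable polarizations by reformulating the problem in terms of the ideal class group, using the classification from Proposition~\ref{prop:decomposable}. By that proposition, every decomposable polarization corresponds uniquely (up to ordering) to an unordered pair $\{F,F'\}$ of elliptic curves with CM by $\Ocal$ such that $E^2\simeq F\times F'$. Writing $E$ itself as the base point of the class-group action, a curve $F$ corresponds to an ideal class $I\in\Cl(\Ocal)$ and $F'$ to a class $I'$, and the condition $E^2\simeq F\times F'$ holds precisely when $I'=I^{-1}$. So the decomposable polarizations are in bijection with the \emph{unordered} pairs $\{I,I^{-1}\}$ as $I$ ranges over $\Cl(\Ocal)$.

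The counting step is then an orbit count for the involution $I\mapsto I^{-1}$ acting on the set $\Cl(\Ocal)$, which has $h$ elements. First I would count the ordered pairs: there are exactly $h$ of them, one for each $I$, since $F$ determines $F'=E^2/F$ uniquely. To pass from ordered pairs $(I,I^{-1})$ to unordered pairs $\{I,I^{-1}\}$, I apply the standard orbit formula: the number of orbits of an involution equals $(n+f)/2$, where $n$ is the size of the set and $f$ is the number of fixed points. Here $n=h$ and the fixed points are exactly the classes $I$ with $I=I^{-1}$, i.e.\ $I^2=[\Ocal]$, which are precisely the elements of the $2$-torsion subgroup of $\Cl(\Ocal)$; there are $t$ of these. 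This yields the count $(h+t)/2$.

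The one subtlety I would flag, rather than any deep obstacle, is making sure the bijection between decomposable polarizations and unordered pairs $\{I,I^{-1}\}$ is genuinely a bijection and not merely a surjection. Proposition~\ref{prop:decomposable} gives both directions: its final statement guarantees that for every $F$ (hence every class $I$) there exists a compatible $F'$, so every unordered pair $\{I,I^{-1}\}$ actually arises from some decomposable polarization (surjectivity); and its uniqueness statement — that the pair $(F,F')$ attached to a given decomposable polarization is unique up to interchange and isomorphism — ensures distinct polarizations give distinct unordered pairs (injectivity). I would also verify that the parity works out so that $(h+t)/2$ is an integer: since squaring is a homomorphism on the abelian group $\Cl(\Ocal)$ with image of index $t$ (the kernel being the $2$-torsion of size $t$, and the image being isomorphic to the quotient), $h$ and $t$ have the same parity, so the formula indeed returns a whole number.

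In short, the proof is essentially a Burnside-style fixed-point count once Proposition~\ref{prop:decomposable} has reduced the geometric problem to combinatorics on $\Cl(\Ocal)$, and the only thing requiring care is the clean identification of fixed points with the $2$-torsion subgroup.
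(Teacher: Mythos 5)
Your proposal is correct and follows essentially the same route as the paper: reduce via Proposition~\ref{prop:decomposable} to counting unordered pairs $\{I,I^{-1}\}$ in $\Cl(\Ocal)$, then apply the fixed-point count for the involution $I\mapsto I^{-1}$, whose fixed points are the $t$ two-torsion classes. The paper states this count in one line; your version merely makes the orbit-counting and the bijectivity checks explicit.
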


\begin{proof}
The proof of Proposition~\ref{prop:decomposable} shows that the unordered pairs $(F,F')$ with $E^2 \simeq F\times F'$ correspond to unordered pairs $(I,I^{-1})$, where $I\in \Cl(\Ocal)$. The number of such pairs is $(h+t)/2$.
\end{proof}

Let $F$ be an elliptic curve with CM by $\Ocal$ and let $I$ be the ideal class that takes~$E$ to $F$. Let $\afrak$ be an ideal of $\Ocal$ representing~$I$, such that $\afrak$ is not divisible by any nontrivial ideal of $\ZZ$. We may write $\afrak = (n,\alpha)$, where $n=\Norm(\afrak)\in\ZZ$ and where~$\alpha\in\afrak$ is chosen so that the ideal $\alpha\afrak^{-1}$ is coprime to $n\Ocal$; then there exist~$x,y\in\ZZ$ such that $xn^2 - y\Norm(\alpha) = n$. Let~$F'$ be the elliptic curve such that $E^2 \simeq F\times F'$. We prove the following corollary in~Section~\ref{sec:cond}.

\begin{corollary} \label{cor:explicit-decomposable}
In the notation of the paragraph above, the isomorphism class of the decomposable polarization on $E^2$ obtained from pulling back the product polarization on $F\times F'$ is represented by the congruence class of the matrix \[\begin{pmatrix} n + \frac{\Norm(\alpha)}{n} & (x + y)\alpha \\ (x + y)\bar{\alpha} & x^2 n + y^2\frac{\Norm(\alpha)}{n}\end{pmatrix}.\]
\end{corollary}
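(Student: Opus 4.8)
The goal is to produce an explicit matrix $M$ whose associated polarization (via Proposition~\ref{prop:matM}) is the pullback of the product polarization on $F \times F'$. The plan is to make the decomposition $E^2 \simeq F \times F'$ completely explicit, express the two projection isogenies $E^2 \to F$ and $E^2 \to F'$ as rows of a $2$-by-$2$ matrix with $\Ocal$-coefficients, and then compute $M = \varphi_0^{-1}\varphi$ directly from the formula for a pullback polarization derived in the proof of Proposition~\ref{prop:decomposable}. Concretely, if $\varphi$ is the pullback of $\varphi_E \times \varphi_E$ under an isomorphism $\psi = \begin{pmatrix} a & b \\ c & d \end{pmatrix}\colon E^2 \to F \times F'$ (writing the isogenies $E \to F$, etc., as entries of $\Ocal$), then $M = \psi^* \psi = \overline{\psi}^{\,t}\psi$, so the whole problem reduces to writing down the four entries $a,b,c,d$ explicitly in terms of the data $\afrak = (n,\alpha)$ and the integers $x,y$ with $xn^2 - y\,\Norm(\alpha) = n$.

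First I would recall the standard dictionary between the ideal class $I$ taking $E$ to $F$ and an actual isogeny: the ideal $\afrak = (n,\alpha)$ realizes the isogeny $E \to E/\afrak \simeq F$, and its norm $n = \Norm(\afrak)$ is the degree. The key point is that the embedding $E \to E^2$ landing in the copy of $F$ must have image a complementary summand to the one cutting out $F'$, and the condition $E^2 \simeq F \times F'$ forces $F'$ to correspond to the inverse class $I^{-1}$, as established at the end of Proposition~\ref{prop:decomposable}. I would use the generators $n$ and $\alpha$ of $\afrak$ to build the two columns of $\psi$: the first column encodes the map to $F$ via multiplication by elements of $\afrak$, and the second encodes the map to $F'$ via $\afrak^{-1}$ (equivalently $\bar{\afrak}/n$). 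The coprimality hypothesis on $\alpha\afrak^{-1}$ and the Bézout-type relation $xn^2 - y\,\Norm(\alpha) = n$ are precisely what guarantee that the resulting $\psi$ is an \emph{isomorphism} (determinant a unit), not merely an isogeny — this is where those auxiliary integers $x,y$ enter.

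Once $\psi$ is pinned down, the computation $M = \overline{\psi}^{\,t}\psi$ is a direct matrix multiplication using $\alpha\bar\alpha = \Norm(\alpha)$ and the relation defining $x,y$; the diagonal entries $n + \Norm(\alpha)/n$ and $x^2 n + y^2 \Norm(\alpha)/n$ and the off-diagonal $(x+y)\alpha$ should fall out after simplification. I would then verify directly that the resulting $M$ is positive definite, Hermitian, and unimodular (determinant $1$), confirming via Proposition~\ref{prop:matM} that it genuinely represents a principal polarization; unimodularity is the cleanest check that $x,y$ have been used correctly. The main obstacle I anticipate is not the final multiplication but the bookkeeping in step one: choosing the entries of $\psi$ so that it is an honest isomorphism of abelian varieties with the correct composite $\psi^*(\varphi_E \times \varphi_E) = \varphi$, rather than an isomorphism off by an isogeny or an automorphism of $F$ or $F'$. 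Getting the normalizations right — so that the degrees work out to $n$ and $\Norm(\alpha)/n$ in the appropriate slots and the cross terms combine into $(x+y)\alpha$ — is the delicate part, and it is exactly the point at which the hypotheses that $\afrak$ is not divisible by a nontrivial integer ideal and that $\alpha\afrak^{-1}$ is coprime to $n\Ocal$ are consumed.
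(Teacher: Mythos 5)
Your overall strategy is the paper's: realize the isomorphism $E^2\simeq F\times F'$ by an explicit matrix built from $n$, $\alpha$, $x$, $y$, and obtain $M$ by pulling back the product polarization through a conjugate-transpose product. Concretely, the paper takes $P=\left(\begin{smallmatrix} n & y\alpha\\ \bar{\alpha} & xn\end{smallmatrix}\right)$, whose determinant is $xn^{2}-y\Norm(\alpha)=n$ and which carries the lattice $\Ocal\times\Ocal$ onto $\afrak\times\bar{\afrak}$ (the lattices of $E^{2}$ and of $F\times F'$), and then shows that the pullback polarization is represented by $(1/n)\,P^{*}P$, which is exactly the displayed matrix.

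However, your central formula $M=\psi^{*}\psi$ is wrong as stated, and the error is not mere bookkeeping. If $\psi$ has entries in $\Ocal$ and identifies $\Ocal\times\Ocal$ with $\afrak\times\bar{\afrak}$, then $\Norm(\det\psi)$ must equal the index $n^{2}$ of $\afrak\times\bar{\afrak}$ in $\Ocal\times\Ocal$, so $\det(\psi^{*}\psi)=\Norm(\det\psi)=n^{2}\neq 1$ and $\psi^{*}\psi$ is not unimodular --- your own final sanity check would fail. The missing ingredient is the normalization of the product polarization on $F\times F'$: in the lattice picture the canonical principal polarization of $F=\CC/\afrak$ is multiplication by $1/(n\delta)$, not by $1/\delta$ as for $E=\CC/\Ocal$, because the relevant dual lattice of $\afrak$ is $(n\delta)^{-1}\afrak$. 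Pulling back multiplication by $1/(n\delta)$ along $P$ and then dividing by the product polarization $1/\delta$ of $E^{2}$ gives $M=(1/n)P^{*}P$; this factor $1/n$ is what restores unimodularity and produces the entries $n+\Norm(\alpha)/n$, $(x+y)\alpha$, and $x^{2}n+y^{2}\Norm(\alpha)/n$. You correctly anticipated that normalization would be the delicate point, but resolving it requires this piece of CM theory (the explicit description of $\hat{F}$ and $\varphi_{F}$ in terms of the ideal $\afrak$), not just care in choosing the entries of $\psi$.
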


\subsection{How to find the polarizations?} \label{sec:HowPol}

In Section~\ref{sec:woPol}, we identified 65 orders~$\Ocal$ for which we need to compute the set of indecomposable principal polarizations, or equivalently, representatives of the congruence classes of indecomposable positive definite unimodular Hermitian matrices with coefficients in $\Ocal$. In this section we describe how we computed these representatives.

Fix an embedding $\epsilon_0$ of $K$ into the complex numbers. For any $\alpha\in\Ocal$, we write~$\alpha>0$ if either the trace of $\alpha$ is positive, or the trace of $\alpha$ is $0$ and $\epsilon_0(\alpha)$ has positive imaginary part. Then for $\alpha,\beta\in\Ocal$ we write $\alpha>\beta$ if $\alpha-\beta > 0$. Clearly this gives us a total ordering on $\Ocal$.

Let $\Hcal$ denote the set of positive definite unimodular Hermitian matrices with coefficients in $\Ocal$. Let $\chi\colon\Hcal\to\NN\times\NN\times\Ocal$ be the map that sends a matrix
$M = \left(\begin{smallmatrix} a&b\\ \bar{b}&d \end{smallmatrix}\right)$ to the triple $(a,d,b)$. We define a total ordering on $\Hcal$ by saying that $M_1 < M_2$ if~$\chi(M_1) < \chi(M_2)$ in the lexicographic ordering on $\NN\times\NN\times\Ocal$.

Given any $M\in\Hcal$, we say that $M$ is \emph{reduced} if $M\le M'$ for all $M'$ congruent to~$M$. Clearly every $M\in\Hcal$ is congruent to a unique reduced matrix. The following algorithm produces the reduced matrix that is congruent to a given~$M$.

\begin{algorithm} \label{alg:reduced-form}
\begin{algtop}
\algin A positive definite unimodular Hermitian matrix $M$ with coefficients
       in $\Ocal$, specified by $a,d\in\ZZ$ and $b\in\Ocal$ such that
       $M = \left(\begin{smallmatrix}a&b\\\bar{b}&d\end{smallmatrix}\right)$.
\algout The reduced matrix congruent to $M$.
\end{algtop}
\begin{alglist}
\item Set $a' = 1$.
\item \label{reduced-short1}
      Compute the set $A'$ of vectors
      $\xx = (x_1,x_2) \in\Ocal^2$ such that $\xx^* M \xx = a'$
      and such that $x_1$ and $x_2$ generate the unit ideal of $\Ocal$. 
      If $A'=\emptyset$, increment $a'$ and repeat.
\item Set $d'= a'$.
\item \label{reduced-short2}
      Compute the set $D'$ of vectors
      $\yy = (y_1,y_2) \in\Ocal^2$ such that $\yy^* M \yy = d'$
      and such that $y_1$ and $y_2$ generate the unit ideal of $\Ocal$.
      If $D'=\emptyset$, increment $d'$ and repeat.
\item Initialize $\Mcal$ to be the empty set.
\item For each $\xx \in A'$ and $\yy \in D'$ such that $\xx$ and $\yy$ 
      generate $\Ocal^2$ as an $\Ocal$-module, 
      let~$M'$ be the matrix representing the Hermitian form $M$ 
      written on the basis $\xx$, $\yy$ of $\Ocal^2$, 
      and add $M'$ to the set $\Mcal$.
\item If $\Mcal$ is empty, increment $d'$ and return to 
      Step~\eqref{reduced-short2}. 
\item Find the smallest element $M'$ of $\Mcal$ under the ordering
      of $\Hcal$ defined above.
\item Output $M'$.
\end{alglist}
\end{algorithm}

\begin{remark} \label{rem:vect}
In Steps~\eqref{reduced-short1} and~\eqref{reduced-short2} of Algorithm~\ref{alg:reduced-form}, we need to find vectors in $\Ocal^2$ of a given length under the quadratic form specified by $M$. We note that this is a finite computation: if $\xx = (x_1, x_2)$ satisfies $\xx^* M \xx = n$, with $M = \left(\begin{smallmatrix}a&b\\\bar{b}&d\end{smallmatrix}\right)$, then \[\Norm(ax_1 + bx_2) + \Norm(x_2) = an.\]
Thus, to solve $\xx^* M \xx = n$, we can simply enumerate all pairs $(u,v)\in\Ocal^2$ with $\Norm(u)+\Norm(v) = an$, and keep those pairs for which $u-bv$ is divisible by~$a$.

Note that solving $\xx^* M \xx = n$ can be done more quickly when the value of $a$ is small. Thus, in Algorithm~\ref{alg:reduced-form}, once one finds a short vector $\xx = (x_1,x_2)$ with~$x_1$ and $x_2$ coprime, it is worthwhile to compute \emph{any} vector $\yy$ such that $\xx$ and $\yy$ generate~$\Ocal$, and to replace $M$ with the congruent form obtained by rewriting $M$ on the basis $\xx,\yy$.
\end{remark}

\begin{theorem} \label{thm:reduced-form}
Algorithm~\textup{\ref{alg:reduced-form}} terminates with the correct result.
\end{theorem}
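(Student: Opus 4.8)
The plan is to track, throughout the execution, the relationship between the data the algorithm manipulates and the invariants $(a_0,d_0,b_0)=\chi(R)$ of the reduced matrix $R$ congruent to the input $M$. Recall that every matrix congruent to $M$ has the form $P^*MP$ with $P\in\GL_2(\Ocal)$, and that $\GL_2(\Ocal)$ is exactly the set of matrices whose columns form an $\Ocal$-basis of $\Ocal^2$; writing $P=[\,\xx\mid\yy\,]$, the entries of $P^*MP$ are $\xx^*M\xx$, $\yy^*M\yy$, and $\xx^*M\yy$. So the first order of business is to show that a vector $\xx\in\Ocal^2$ occurs as a column of some $P\in\GL_2(\Ocal)$ --- equivalently, is part of a basis --- if and only if its coordinates generate the unit ideal, which is exactly the primitivity condition tested in Steps~\eqref{reduced-short1} and~\eqref{reduced-short2}. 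One direction is immediate from $\det P\in\Ocal^*$; for the other, a B\'ezout relation $sx_1+tx_2=1$ lets me complete $\xx=(x_1,x_2)$ by $\yy=(-t,s)$, whose determinant with $\xx$ is $1$. This argument makes no use of $\Ocal$ being a PID, which matters because several of our orders have nontrivial class group.

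Next I would establish that the reduced matrix exists and record three characterizations of its invariants. Positive definiteness forces $\xx^*M\xx\in\ZZ_{>0}$ for $\xx\neq 0$: the value lies in $\Ocal$, is real and hence fixed by complex conjugation, so it lies in $\Ocal\cap\QQ=\ZZ$, and it is positive. Consequently the first-diagonal entries of the matrices congruent to $M$ are exactly the values $\xx^*M\xx$ as $\xx$ ranges over primitive vectors, and $a_0$ is their minimum --- precisely the $a'$ produced by Steps~1--\eqref{reduced-short1}. Swapping the two basis vectors shows $a_0\le d_0$, which justifies starting the search for $d_0$ at $d'=a'$ in Step~3. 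Then $d_0$ is the least $d\ge a_0$ for which some primitive $\yy$ of length $d$ completes some $\xx\in A'$ to a basis. Finally, the Cauchy--Schwarz inequality gives $\Norm(\xx^*M\yy)\le(\xx^*M\xx)(\yy^*M\yy)=a_0 d_0$, so the off-diagonal entry $b$ ranges over a set of elements of bounded norm, hence a finite set, among which $b_0$ is the minimum. This finiteness simultaneously guarantees that the set $\Mcal$ in Step~6 is finite, so that Step~8 is well defined, and that a unique reduced matrix exists.

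With these facts in hand, correctness and termination are two readings of the same bookkeeping. Steps~1--\eqref{reduced-short1} terminate because the primitive vector $(1,0)$ has finite length, and they return $a'=a_0$. The loop formed by Steps~\eqref{reduced-short2}--7 increments $d'$ through the achievable primitive lengths and halts at the first value for which some $\xx\in A'$ and $\yy\in D'$ span $\Ocal^2$; I would argue this value equals $d_0$. It cannot be smaller, by the minimality characterization of $d_0$, and it is attained no later than $d'=d_0$, because the basis realizing $R$ supplies a qualifying pair once $D'$ reaches length $d_0$. Hence the loop terminates, $\Mcal$ collects precisely the matrices congruent to $M$ with diagonal $(a_0,d_0)$, and Step~8 selects the one with least off-diagonal entry, namely $R$.

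The main obstacle I anticipate is the termination and correctness of the Step~\eqref{reduced-short2}--7 loop, since it is the only place where the interaction between the two separate length searches matters: one must rule out the algorithm stalling at a length $d'$ for which $D'$ is nonempty yet no element of $D'$ completes any $\xx\in A'$ to a basis, and then confirm it resumes correctly after the increment in Step~7. Pinning down that the halting value of $d'$ coincides with the invariant $d_0$ --- rather than merely with the smallest primitive length $\ge a_0$ --- is the crux, and it is exactly here that the existence of the reduced matrix, supplying an explicit qualifying basis, is used.
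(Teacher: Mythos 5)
Your proposal is correct and follows the same strategy as the paper's proof: identify the columns of a congruence matrix $P$ with a basis $(\xx,\yy)$ of $\Ocal^2$, observe that the lexicographic definition of reducedness means minimizing $\xx^*M\xx$, then $\yy^*M\yy$, then the off-diagonal entry, and check that the algorithm's loops realize exactly this search. The paper's argument is a brief sketch, and you supply the supporting details it leaves implicit (the primitivity criterion for basis vectors, integrality and positivity of the diagonal values, the Cauchy--Schwarz bound giving finiteness of $\Mcal$ and existence of the reduced form, and the termination analysis of the $d'$ loop), all of which are correct.
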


\begin{proof}
Let $M' = \big(\begin{smallmatrix}a'&b'\\\bar{b}'&d'\end{smallmatrix}\big)$ be the reduced matrix congruent to $M$. If $P = \left(\begin{smallmatrix}x_1&y_1\\x_2&y_2\end{smallmatrix}\right)$ is an element of $\GL_2(\Ocal)$ such that $P^* M P = M'$, and if we set $\xx = (x_1,x_2)$ and~$\yy = (y_1,y_2)$, then $a' = \xx^* M \xx$ and $d' = \yy^* M \yy$. By the very definition of the ordering on $\Hcal$, then, we want to find vectors $\xx$ and $\yy$, each with coordinates that are coprime to one another, such that $\xx^*M\xx$ is as small as possible and $\yy^*M\yy$ is as small as possible, given that $\xx$ and $\yy$ generate $\Ocal^2$ as an $\Ocal$-module. This is what the algorithm does. Finally, among all possible such pairs $(\xx,\yy)$, we simply need to choose the one that gives the smallest matrix.
\end{proof}

Hayashida~\cite{Hay68} gives a formula for the number of isomorphism classes of indecomposable principal polarizations on $E^2$ in the case where $E$ has CM by a maximal order.\footnote{There is a typographical error in Hayashida's paper. In the second line of page 43, the term $(1/4)(1 - (-1))^{(m^2-1)/8}$ should be $(1/4)(1 - (-1)^{(m^2-1)/8})h$. Note that the correction involves both moving a parenthesis and adding an instance of the variable~$h$.} Hayashida's proof does not immediately lead to a constructive method of finding polarizations representing the isomorphism classes, but simply knowing the number of isomorphism classes is the key to a straightforward algorithm for producing such representatives.

\begin{algorithm} \label{alg:all-polarizations}
\begin{algtop}
\algin A fundamental discriminant $\Delta<0$.
\algout A list of reduced matrices representing the distinct congruence classes of positive definite unimodular Hermitian matrices with entries in the order $\Ocal$ of discriminant $\Delta$, separated into the decomposable and indecomposable classes.
\end{algtop}
\begin{alglist}
\item Compute the number $N$ of indecomposable polarizations on $E^2$ using
      Hayashida's formula.
\item Compute the set $\Dcal$ of reduced matrices representing decomposable
      polarizations, using Corollary~\textup{\ref{cor:explicit-decomposable}}
      and Algorithm~\textup{\ref{alg:reduced-form}}.
\item Initialize $\Ical$ to be the empty set and set $P = 0$.
\item \label{all-product}
      Increment $P$, and 
      compute the set $S$ of elements of $\Ocal$ of norm $P-1$.
\item For every divisor $a$ of $P$ with $a\le P/a$, and for every $b \in S$\textup{:}
      \begin{algsublist}
      \item Compute the reduced form $M$ of the matrix 
            $\left(\begin{smallmatrix}a&b\\\bar{b}&P/a\end{smallmatrix}\right)$.
      \item If $M$ is not contained in $\Dcal\cup \Ical$, 
            then add $M$ to the set $\Ical$.
      \end{algsublist}
\item If $\#\Ical < N$, then return to Step~\eqref{all-product}.
\item Return $\Dcal$ and $\Ical$.
\end{alglist}
\end{algorithm}

Of course, for our goal of producing genus-$2$ curves over $\QQ$ with Jacobians isomorphic to $E^2$, we only need the indecomposable polarizations.

\begin{theorem} \label{thm:all-polarizations}
Algorithm~\textup{\ref{alg:all-polarizations}} terminates with the correct result.
\end{theorem}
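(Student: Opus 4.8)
The plan is to prove two things: that the main loop of Algorithm~\ref{alg:all-polarizations} halts, and that on halting the returned pair $(\Dcal,\Ical)$ is a complete, correctly separated list of representatives for the congruence classes in $\Hcal$. The structural fact driving everything is that a matrix $M = \left(\begin{smallmatrix}a&b\\\bar{b}&d\end{smallmatrix}\right)\in\Hcal$ has diagonal entries fixed by conjugation, so $a,d\in\ZZ_{>0}$, while unimodularity forces $\det M = ad - \Norm(b) = 1$, i.e.\ $\Norm(b) = ad - 1$. Writing $P = ad$, one checks conversely that each matrix assembled in Step~5 is automatically positive definite, Hermitian, and unimodular; thus the matrices examined at stage $P$ of the loop are precisely the elements $\left(\begin{smallmatrix}a&b\\\bar{b}&d\end{smallmatrix}\right)$ of $\Hcal$ with $ad = P$ and $a\le d$.

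First I would check that the reduced form of every congruence class is produced. A reduced matrix $\left(\begin{smallmatrix}a'&b'\\\bar{b}'&d'\end{smallmatrix}\right)$ must satisfy $a'\le d'$: otherwise, conjugating by the swap matrix $\left(\begin{smallmatrix}0&1\\1&0\end{smallmatrix}\right)\in\GL_2(\Ocal)$ gives the congruent matrix $\left(\begin{smallmatrix}d'&\bar{b}'\\b'&a'\end{smallmatrix}\right)$, which is strictly smaller in the lexicographic ordering on $\Hcal$, contradicting reducedness. Hence at stage $P' = a'd'$ this reduced matrix is itself one of the matrices examined in Step~5 (its first diagonal entry $a'$ divides $P'$ with $a'\le P'/a'$, and $\Norm(b') = P'-1$), and by Theorem~\ref{thm:reduced-form} the reduction in Step~(a) returns it unchanged. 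Running over all $P$ therefore eventually produces the reduced form of every congruence class in $\Hcal$.

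Next I would pin down the separation into $\Dcal$ and $\Ical$. By Corollaries~\ref{cor:decomposable} and~\ref{cor:explicit-decomposable}, Step~2 computes $\Dcal$ as exactly the set of reduced forms of the $(h+t)/2$ decomposable classes, and these are pairwise distinct because distinct decomposable polarizations are non-isomorphic (Proposition~\ref{prop:decomposable}), hence non-congruent (Proposition~\ref{prop:matM}). Since reduced forms are unique per class and every class is either decomposable or indecomposable but never both, a computed reduced form lies in $\Dcal$ if and only if its class is decomposable. Consequently the test in Step~(b) adds $M$ to $\Ical$ precisely when its class is indecomposable and not yet recorded, so at every moment of the run $\Ical$ is a set of distinct reduced forms of distinct indecomposable classes, whence $\#\Ical\le N$.

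Termination and final correctness then follow together, and this is where the real leverage lies: $N$ is the \emph{exact} number of indecomposable classes, supplied by Hayashida's formula. Each stage $P$ is a finite computation, since the set $S$ of elements of $\Ocal$ of norm $P-1$ is finite (cf.\ Remark~\ref{rem:vect}) as is the set of divisors of $P$; and by the exhaustiveness just established, each of the $N$ indecomposable classes is recorded after finitely many stages. Hence $\#\Ical$ rises to $N$ and the loop stops, at which point $\Ical$ holds $N$ distinct indecomposable reduced forms --- necessarily all of them --- and $\Dcal$ holds all the decomposable ones. I expect the only real obstacle to be conceptual rather than computational: halting is decidable \emph{only} through this externally supplied count, for without Hayashida's $N$ one could run forever, never certain that some further indecomposable class might not surface at a larger $P$. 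The remaining ingredients --- finiteness of each stage and correctness of the reduction step --- are already guaranteed by Remark~\ref{rem:vect} and Theorem~\ref{thm:reduced-form}.
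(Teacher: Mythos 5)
Your proof is correct and follows the same strategy as the paper's: enumerate all matrices in $\Hcal$ ordered by the product $ad$ of their diagonal entries, reduce each one, and stop once Hayashida's count $N$ of indecomposable classes is reached. The paper's own proof is only a two-sentence sketch of this, so your additional details (the observation that a reduced matrix satisfies $a'\le d'$ and hence appears at stage $P'=a'd'$, and the verification that membership in $\Dcal$ correctly detects decomposability) are welcome elaborations rather than a different route.
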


\begin{proof}
The algorithm is very straightforward. Every isomorphism class of principal polarization appears somewhere on the countable list that we are considering, and we simply enumerate the polarizations and compute their reduced forms until we have found the right number of isomorphism classes.
\end{proof}

\begin{remark}
In our applications, when the class group of $\Ocal$ has exponent at most~$2$, we can speed up our algorithm as follows: once we have a principal polarization $M$ on $E^2$, we can view the same matrix as giving a polarization on $F^2$ for any elliptic curve $F$ with CM by $\Ocal$. Since the class group has exponent at most~$2$, there exists an isomorphism $E^2 \to F^2$, and pulling $M$ back to $E^2$ via such an isomorphism gives a new positive definite unimodular Hermitian matrix~$M'$. Each time we find a new reduced polarization~$M$, we compute the reduced forms of the polarizations~$M'$ associated to all the curves $F$ isogenous to~$E$, and add these reduced forms to the set $\Dcal$ if they are new.
\end{remark}

If $\varphi$ is a principal polarization on $E^2$ and $M$ is the corresponding Hermitian matrix, then the automorphism group of the polarized abelian variety $(E^2,\varphi)$, denoted by $\Aut(E^2,\varphi)$, is isomorphic to the group $\{ P \in \GL_2(\Ocal) ~|~ P^* M P=M\}$. Note that if $\varphi$ is indecomposable, so that $(E^2,\varphi)$ is the polarized Jacobian of a curve~$C$, then Torelli's theorem~\cite[Appendix]{LS01} shows that this group is also isomorphic to $\Aut(C)$. In any case, computing $\Aut(E^2,\varphi)$ is straightforward:

\begin{algorithm} \label{alg:automorphisms}
\begin{algtop}
\algin A positive definite unimodular Hermitian matrix 
       $M = \left(\begin{smallmatrix}a&b\\\bar{b}&d\end{smallmatrix}\right)$
       with entries in an imaginary quadratic maximal order $\Ocal$.
\algout A list of all matrices $P \in \GL_2(\Ocal)$ such that $P^* M P=M$.
\end{algtop}
\begin{alglist}
\item Compute the set $A$ of vectors 
      $\xx = (x_1,x_2) \in\Ocal^2$ such that $\xx^* M \xx = a$
      and such that $x_1$ and $x_2$ generate the unit ideal of $\Ocal$.
\item Compute the set $D$ of vectors 
      $\yy = (y_1,y_2) \in\Ocal^2$ such that $\yy^* M \yy = d$
      and such that $y_1$ and $y_2$ generate the unit ideal of $\Ocal$.
\item Initialize $\Acal$ to be the empty set.
\item For each $\xx \in A$ and $\yy \in D$ such that $\xx$ and $\yy$ 
      generate $\Ocal^2$ as an $\Ocal$-module\textup{:}
      \begin{algsublist}
      \item Compute $b' = \xx^* M \yy$.
      \item If $b' = b$ then add the matrix
      $\left(\begin{smallmatrix}x_1&y_1\\x_2&y_2\end{smallmatrix}\right)$
      to the set $\Acal$.
      \end{algsublist}
\item Output $\Acal$.
\end{alglist}
\end{algorithm}

\noindent(See Remark~\ref{rem:vect} for an explanation of how to implement the two first steps.)

\begin{theorem} \label{thm:automorphisms}
Algorithm~\textup{\ref{alg:automorphisms}} terminates with the correct result.
\end{theorem}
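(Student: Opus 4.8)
The plan is to reduce the matrix equation $P^* M P = M$ to a small collection of scalar conditions on the two columns of $P$, and then to verify that the algorithm enumerates precisely the matrices meeting those conditions. Writing $P = \left(\begin{smallmatrix}x_1&y_1\\x_2&y_2\end{smallmatrix}\right)$ with columns $\xx = (x_1,x_2)$ and $\yy = (y_1,y_2)$, I would first compute $P^* M P$ entrywise, finding that its $(1,1)$, $(2,2)$, and $(1,2)$ entries are $\xx^* M \xx$, $\yy^* M \yy$, and $\xx^* M \yy$ respectively, while the $(2,1)$ entry is the conjugate of the $(1,2)$ entry and, since $M$ is Hermitian, carries no independent information. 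Hence $P^* M P = M$ is equivalent to the three conditions $\xx^* M \xx = a$, $\yy^* M \yy = d$, and $\xx^* M \yy = b$, together with the membership $P\in\GL_2(\Ocal)$, which says exactly that $\xx$ and $\yy$ form an $\Ocal$-basis of $\Ocal^2$.

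The one step that is not purely formal is justifying why the searches for $A$ and $D$ may be restricted to vectors whose entries generate the unit ideal. Here I would note that if $P\in\GL_2(\Ocal)$ then $\det P = x_1 y_2 - y_1 x_2$ is a unit, and since this determinant lies in the ideal $(x_1,x_2)$ as well as in $(y_1,y_2)$, both columns of $P$ automatically generate the unit ideal. Thus imposing primitivity on $\xx$ and $\yy$ discards no genuine automorphism, and it is a harmless constraint when enumerating candidates.

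With this dictionary established, correctness follows from two inclusions. For the forward direction, any $P$ placed in $\Acal$ has, by construction, columns $\xx\in A$ and $\yy\in D$ that generate $\Ocal^2$ and satisfy $\xx^* M \yy = b$; by the entrywise computation this forces $P^* M P = M$, so $P$ is an automorphism. Conversely, given $P\in\GL_2(\Ocal)$ with $P^* M P = M$, its columns are primitive by the determinant observation and satisfy $\xx^* M \xx = a$ and $\yy^* M \yy = d$, so $\xx\in A$ and $\yy\in D$; since $\xx,\yy$ generate $\Ocal^2$ and $\xx^* M \yy = b$, this pair is examined in the main loop and $P$ is added to $\Acal$. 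Therefore $\Acal$ is exactly $\{P\in\GL_2(\Ocal) \mid P^* M P = M\}$.

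Finally, termination reduces to the finiteness of $A$ and $D$. Because $M$ is positive definite (equivalently, unimodular with $\det M = 1$), the equation $\xx^* M \xx = a$ has only finitely many solutions; concretely, Remark~\ref{rem:vect} rewrites it as $\Norm(a x_1 + b x_2) + \Norm(x_2) = a^2$, and only finitely many pairs satisfy a norm equation with fixed positive right-hand side. The same holds for $D$, so the double loop ranges over the finite set $A\times D$ and the algorithm halts. I expect the determinant observation underlying the primitivity restriction to be the only point requiring genuine care; the remainder is bookkeeping with the entries of $P^* M P$.
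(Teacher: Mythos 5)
Your proof is correct and takes essentially the same route as the paper's: reduce $P^*MP=M$ to the three column conditions $\xx^*M\xx=a$, $\yy^*M\yy=d$, $\xx^*M\yy=b$ and check that the algorithm enumerates exactly the matrices satisfying them, with your determinant argument for primitivity and the finiteness of the norm-equation solution sets filling in details the paper leaves implicit. The only blemish is the parenthetical claim that positive definiteness is ``equivalently, unimodular with $\det M=1$'' --- these are not equivalent --- but it is harmless since $M$ is positive definite by hypothesis.
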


\begin{proof}
If $P=\left(\begin{smallmatrix} x_1&y_1\\ x_2&y_2\end{smallmatrix}\right)\in\GL_2(\Ocal)$ satisfies $P^* M P=M$, then $\xx = (x_1,x_2)$ and $\yy = (y_1,y_2)$ are vectors in $\Ocal^2$ such that $\xx^* M \xx = a$ and $\yy^* M \yy = d$ and $\xx^* M \yy = b$. The algorithm simply enumerates all $\xx$ and $\yy$ that meet the first two conditions, and checks to see whether they meet the third.
\end{proof}

\subsection{Conditions on the polarization} \label{sec:cond}

Throughout this section, $E$ is an elliptic curve with CM by a maximal order $\Ocal$ of an imaginary quadratic field $K$ whose class group has exponent at most~$2$. Also $\varphi$ is a principal polarization on $E^2$ corresponding (as in Proposition~\ref{prop:matM}) to a positive definite unimodular Hermitian matrix $M$ with entries in $\Ocal$ and $\Mb$ is the field of moduli of the polarized abelian variety $(E^2,\varphi)$. We resume our analysis of the condition that $\Mb = \QQ$.

\begin{proposition} \label{prop:polarization}
Let $\afrak_1$, $\dotsc$, $\afrak_h$ be ideals of $\Ocal$ representing all of the elements of the class group of $\Ocal$, and for each $i$ let $n_i\in \ZZ_{>0}$ generate $\Norm(\afrak_i)$. Then $\Mb = \QQ$ if and only if for every $i$ there exists a matrix $P_i \in \GL_2(K)$, with entries in $\afrak_i$, such that $n_i M = P_i^* M P_i$.
\end{proposition}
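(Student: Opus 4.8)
The plan is to unwind the definition of the field of moduli and then translate everything into the language of Hermitian lattices. By the first characterization of the field of moduli, $\Mb=\QQ$ holds if and only if $(E^2,\varphi)\simeq(E^2,\varphi)^\sigma$ for every $\sigma\in\Gal(\QQbar/\QQ)$. As in Section~\ref{sec:woPol}, each $\sigma$ sends $E$ to a curve $E^\sigma$ with CM by $\Ocal$ (since $K/\QQ$ is Galois and $\Ocal$ is the maximal order), so $E^\sigma\simeq E/I_\sigma$ for a unique class $I_\sigma\in\Cl(\Ocal)$, and by the theory of complex multiplication every class already arises from some $\sigma\in\Gal(\QQbar/K)$. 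The conjugate $(E^2,\varphi)^\sigma$ is then, over $\CC$, the polarized variety attached to the endomorphism-matrix $\sigma(M)$ on the curve $E/\afrak_i$, where $[\afrak_i]=I_\sigma$; when $\sigma$ fixes $K$ this matrix is simply $M$. Using Proposition~\ref{prop:matM}, I would encode $(E^2,\varphi)$ as the Hermitian $\Ocal$-lattice $(\Ocal^2,M)$ and encode its conjugate as the lattice $(\afrak_i\oplus\afrak_i,\tfrac1{n_i}M)$, the factor $1/n_i$ being forced by unimodularity, so that an isomorphism of polarized varieties becomes exactly an isometry of Hermitian lattices.

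The core computation is to show that such an isometry exists if and only if there is a $P_i\in\GL_2(K)$ with entries in $\afrak_i$ satisfying $n_iM=P_i^*MP_i$. Here I would use that the relevant group of homomorphisms between these CM curves is identified with the ideal $\afrak_i$, so a map $\Ocal^2\to\afrak_i\oplus\afrak_i$ is precisely a matrix with entries in $\afrak_i$; pulling back the product polarization along such a map multiplies it by the degree, which produces the norm factor $n_i$. Writing out that the pullback of $\varphi$ equals the conjugate polarization then yields the identity $n_iM=P_i^*MP_i$, and a discriminant/index count (both forms being unimodular) forces $P_i$ to carry $\Ocal^2$ isomorphically onto $\afrak_i\oplus\afrak_i$, so that it genuinely represents an isomorphism rather than a proper isogeny. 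This same bookkeeping of pullbacks along isogenies is what will also establish Corollary~\ref{cor:explicit-decomposable}.

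The point requiring the most care is that letting $\sigma$ range over all of $\Gal(\QQbar/\QQ)$, and not merely over $\Gal(\QQbar/K)$, introduces no condition beyond the stated ones. For $\sigma$ restricting to complex conjugation on $K$, the endomorphism-matrix of the conjugate polarization is $\bar M$ rather than $M$; moreover complex conjugation acts on $\Cl(\Ocal)$ by inversion, which is trivial because the class group has exponent at most~$2$ (Proposition~\ref{prop:exp2}), so these conjugates are attached to the \emph{same} family of classes $[\afrak_i]$. I would dispose of the apparent extra $\bar M$-conditions by the observation that every positive definite unimodular Hermitian $M$ is congruent to $\bar M$: self-duality of a unimodular lattice gives $M\simeq M^{-1}$ (the dual basis lies in the same lattice), while the swap $\left(\begin{smallmatrix}0&1\\-1&0\end{smallmatrix}\right)$ gives $M\simeq\overline{M^{-1}}$, and combining these yields $M\simeq\bar M$. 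Hence the complex-conjugation conjugates are isometric to $(E^2,\varphi)$ exactly when the conjugates coming from $\Gal(\QQbar/K)$ are. I expect this reconciliation of the full Galois action with the purely $K$-rational matrix condition to be the main obstacle; the remainder is a direct isogeny computation.
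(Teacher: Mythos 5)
Your proposal is correct and follows essentially the same route as the paper: the main step is the same CM-lattice computation identifying the $\sigma$-conjugate (for $\sigma$ fixing $K$) with $(\afrak_i\oplus\afrak_i,\tfrac{1}{n_i}M)$ so that an isomorphism of polarized surfaces becomes a matrix $P_i$ with entries in $\afrak_i$ satisfying $n_iM=P_i^*MP_i$, and the reduction of the full Galois action to $\Gal(\QQbar/K)$ rests, as in the paper's Lemma~\ref{lemma:KvsQ}, on the exponent-$2$ hypothesis together with the congruence $M\simeq\bar M$ (which the paper realizes by the explicit matrix $\bigl(\begin{smallmatrix}b&d\\-a&-\bar b\end{smallmatrix}\bigr)$ and you obtain by composing $M\simeq M^{-1}$ with the swap). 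Your explicit index count showing that a matrix $P_i$ with entries in $\afrak_i$ satisfying the equation automatically maps $\Ocal^2$ onto $\afrak_i\oplus\afrak_i$ is a worthwhile detail that the paper leaves implicit.
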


\begin{proof}
Lemma~\ref{lemma:KvsQ} below shows that $\Mb = \QQ$ if and only if $\Mb\subseteq K$, and this is the case if and only if for every $\sigma\in\Gal(\QQbar/K)$ there exists an isomorphism $\alpha_\sigma\colon (E^2,\varphi) \to \big((E^\sigma)^2,\varphi^{\sigma}\big)$. To understand this condition, we use the classical theory of complex multiplication of abelian varieties; the book of Shimura and Taniyama~\cite{Shi61} is one possible reference, especially Chapter~II.

Under the embedding $\epsilon_0\colon K\to \CC$ we chose earlier, the isomorphism classes of elliptic curves over $\QQbar\subset\CC$ with CM by $\Ocal$ correspond to the lattices $\epsilon_0(\afrak)$ up to scaling, for fractional ideals $\afrak$ of $\Ocal$. Since the class group of the order $\Ocal$ is $2$-torsion, we have $E^2 \simeq F^2$ for every $E$ and $F$ with CM by $\Ocal$, so we may as well choose our~$E$ so that it corresponds to the trivial ideal~$\Ocal$.

Let $\Delta$ be the discriminant of $\Ocal$ and let $\delta\in\Ocal$ be a square root of $\Delta$, chosen so that $\epsilon_0(\delta)$ is positive imaginary. Note that the trace dual $\afrak^\dagger$ of an arbitrary fractional $\Ocal$-ideal $\afrak$ is $(1/\delta)\afrak^{-1}$. If $F$ is the elliptic curve corresponding to $\afrak$, then the dual of $F$ is the elliptic curve corresponding to the complex conjugate of $\afrak^\dagger$, and the canonical principal polarization of $F$ is the isomorphism $\afrak\to (1/\delta)\bar{\afrak}^{-1}$ given by $x\mapsto x/(n\delta)$, where $n\in\QQ$ is the positive generator of $\Norm(\afrak)$. (See~\cite[\S~6.3]{Shi61} for more details.) 

Let $\varphi_0$ be the product polarization on $E^2$. For $\alpha_\sigma\colon E^2\to (E^\sigma)^2$ to give an isomorphism between $(E^2,\varphi)$ and $\big((E^\sigma)^2,\varphi^\sigma\big)$, the following diagram must be commutative:
\[\xymatrix{ E^2 \ar[rr]^{M} \ar[d]_{\alpha_\sigma} && E^2\ar[rr]^{\varphi_0} && \widehat{E}^2\\ (E^\sigma)^2 \ar[rr]^{M} && (E^\sigma)^2 \ar[rr]^{\varphi_0^\sigma} && (\widehat{E}^\sigma)^2\ar[u]_{ \widehat{\alpha}_{\sigma}}.}\]
To express this diagram in terms of lattices, we let $\afrak$ be an ideal corresponding to~$E^\sigma$, we let $n = \Norm(\afrak)$, and we let $P_\sigma$ be the matrix in $\GL_2(K)$ corresponding to~$\alpha_\sigma$. Then the preceding diagram becomes
\[\xymatrix{ \Ocal\times\Ocal \ar[rr]^{M} \ar[d]_{P_\sigma} && \Ocal\times\Ocal \ar[rr]^{1/\delta\qquad} && (1/\delta)(\Ocal\times\Ocal)\\
\afrak\times\afrak \ar[rr]^{M} && \afrak\times\afrak \ar[rr]^{1/(n\delta)\qquad} && (1/\delta)(\bar{\afrak}^{-1}\times\bar{\afrak}^{-1}) \ar[u]_{ P^*_{\sigma}}.}\]
Thus, there exists an isomorphism $(E^2,\varphi)\to \big((E^\sigma)^2,\varphi^\sigma\big)$ of polarized varieties if and only if there exists a matrix $P$, with entries in $\afrak$, such that $n M = P^* M P$. Since the Galois group of $\QQbar/K$ acts transitively on the set of elliptic curves with~CM by~$\Ocal$, the field of moduli of $(E^2,\varphi)$ is contained in $K$ if and only if we can find such a matrix $P$ for each of the ideals $\afrak_1$, $\dotsc$, $\afrak_h$.
\end{proof}

\begin{lemma} \label{lemma:KvsQ}
Let $E$, $\varphi$, and $\Mb$ be as mentioned at the beginning of this section. Then $\Mb = \QQ$ if and only if $\Mb\subseteq K$.
\end{lemma}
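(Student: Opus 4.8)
The forward implication is immediate, since $\QQ\subseteq K$. For the converse my plan is to show that, in fact \emph{independently} of the hypothesis, the polarized surface $(E^2,\varphi)$ is always isomorphic to its complex conjugate. Granting this, if $c\in\Gal(\QQbar/\QQ)$ denotes the complex conjugation coming from the embedding $\epsilon_0$ fixed earlier, then $c$ fixes the isomorphism class of $(E^2,\varphi)$, so $\Mb$ is contained in the fixed field $\QQbar\cap\RR$ of $c$. Combined with $\Mb\subseteq K$ this gives $\Mb\subseteq K\cap\RR=\QQ$, since $K$ is imaginary quadratic; hence $\Mb=\QQ$.

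The heart of the argument is therefore to prove that $(E^2,\varphi)\simeq(E^2,\varphi)^c$. Because the class group has exponent at most~$2$ we have $E^2\simeq F^2$ for every $F$ with CM by~$\Ocal$, and since the field of moduli depends only on the isomorphism class I would replace $E$ by the curve attached to the trivial ideal, so that $E^2=\CC^2/\Ocal^2$ and $\varphi$ is given by a positive definite unimodular Hermitian matrix $M$ as in Proposition~\ref{prop:matM}. Using the lattice dictionary from the proof of Proposition~\ref{prop:polarization}, the conjugate $(E^2,\varphi)^c$ is the pair $(\CC^2/\overline{\Ocal^2},\bar H)$, where $\bar H(z,w)=\overline{H(\bar z,\bar w)}$; the new feature compared with the conjugation by elements of $\Gal(\QQbar/K)$ treated there is precisely this conjugation of the Hermitian form. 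Since $\Ocal$ is stable under complex conjugation we have $\overline{\Ocal^2}=\Ocal^2$, and evaluating $\bar H$ on the standard basis shows that its Gram matrix is $\bar M$, the entrywise conjugate of $M$ (equal to $M^{\mathrm T}$ because $M$ is Hermitian). Thus $(E^2,\varphi)^c$ corresponds to $\bar M$, and by Proposition~\ref{prop:matM} it suffices to prove that $M$ and $\bar M$ are congruent over $\GL_2(\Ocal)$.

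I would establish this congruence in two steps, both exploiting that $M$ is unimodular, so that $\det M=1$ and $M^{-1}$ has entries in $\Ocal$. First, $M\simeq M^{-1}$: the matrix $P=M^{-1}$ lies in $\GL_2(\Ocal)$ and is Hermitian, whence $P^*MP=M^{-1}MM^{-1}=M^{-1}$; conceptually this is just the statement that the dual basis of $\Ocal^2$ relative to $M$ is again an $\Ocal$-basis, with Gram matrix $M^{-1}$. Second, $M^{-1}\simeq\bar M$: a direct check shows that $P=\left(\begin{smallmatrix}0&1\\-1&0\end{smallmatrix}\right)$ satisfies $P^*M^{-1}P=\bar M$. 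Composing the two congruences gives $M\simeq\bar M$, and hence $(E^2,\varphi)\simeq(E^2,\varphi)^c$, completing the argument.

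The delicate point, which I expect to be the main obstacle, is exactly the congruence $M\simeq\bar M$: Hermitian forms over an imaginary quadratic order are not in general isometric to their conjugates, so the proof must genuinely use unimodularity (through $M\simeq M^{-1}$) together with the rank being~$2$ (through the explicit matrix $\left(\begin{smallmatrix}0&1\\-1&0\end{smallmatrix}\right)$ realizing $M^{-1}\simeq\bar M$). The other place where care is needed is the claim that the polarization of the conjugate variety corresponds to $\bar M$ rather than to $M$; I would verify this by tracking the effect of $\delta\mapsto-\delta$ under complex conjugation in the normalization used in the proof of Proposition~\ref{prop:polarization}, since that is where a sign or convention error would most easily slip in.
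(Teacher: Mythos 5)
Your proof is correct and follows essentially the same route as the paper: both reduce to showing that $(E^2,\varphi)$ is isomorphic to its image under a complex conjugation fixing $E$, and both realize this by exhibiting an explicit $P\in\GL_2(\Ocal)$ with $P^*MP=\bar{M}$ --- indeed your composite matrix $M^{-1}\left(\begin{smallmatrix}0&1\\-1&0\end{smallmatrix}\right)=\big(\begin{smallmatrix}b&d\\-a&-\bar{b}\end{smallmatrix}\big)$ is literally the matrix the paper writes down. The only cosmetic difference is that you normalize $E$ to the curve of the trivial ideal to make it conjugation-stable, whereas the paper instead invokes the fact that $\QQ\big(j(E)\big)$ is totally real.
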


\begin{proof}
Let us assume that $\Mb\subseteq K$; we must show that $\Mb = \QQ$. Since $\Ocal$ has a class group of exponent at most~$2$, \cite[Exercise~5.8, p.~124]{Shi71} implies that $\QQ\big(j(E)\big)$ is totally real. Let $\iota$ be any complex conjugation in~$\Gal(\QQbar/\QQ)$, so that $\iota$ acts trivially on $\QQ(j(E))$ and nontrivially on~$K$. Given any $\sigma\in \Gal(\QQbar/\QQ)$, we want to show that $(E^2,\varphi)\simeq((E^\sigma)^2,\varphi^\sigma)$.

If $\sigma$ acts trivially on~$K$, then such an isomorphism exists, because $\Mb\subseteq K$. Otherwise, $\sigma\iota$ acts trivially on $K$, and we have $(E^2,\varphi)\simeq ((E^{\sigma\iota})^2,\varphi^{\sigma\iota})$, and therefore $((E^{\iota})^2,\varphi^{\iota})\simeq ((E^\sigma)^2,\varphi^\sigma)$. So it is enough for us to show that \mbox{$(E^2,\varphi)\simeq((E^{\iota})^2,\varphi^{\iota})$}. If we choose our model of $E$ to be defined over $\QQ(j(E))$, then $E^\iota = E$, and we simply need to show that there exists an element $P$ of $\GL_2(\Ocal)$ such that~$\bar{M} = P^*MP$. If $M = \left(\begin{smallmatrix} a& b \\\bar{b} & d\end{smallmatrix}\right)$, we can simply take $P= \big(\begin{smallmatrix} b & d \\ -a & -\bar{b} \end{smallmatrix}\big)$.
\end{proof}

At this point, we have reviewed enough CM theory to prove Corollary~\ref{cor:explicit-decomposable}.

\begin{proof}[Proof of Corollary~\textup{\ref{cor:explicit-decomposable}}]
We are given an ideal $\afrak = (n,\alpha)$ of $\Ocal$, where $n\in\ZZ$ is the norm of $\afrak$ and where $\alpha\in\Ocal$, and we have $x,y\in\ZZ$ such that $xn^2 - y\Norm(\alpha) = n$. The complex conjugate $\bar{\afrak}$ of $\afrak$ represents the inverse of the class of $\afrak$ in $\Cl(\Ocal)$, and the matrix $P = \left(\begin{smallmatrix}n & y\alpha \\ \bar{\alpha} & xn\\\end{smallmatrix}\right)$ takes the lattice $\Ocal\times\Ocal\subset K^2$ onto the lattice $\afrak\times\bar{\afrak}$. The dual lattice for $\afrak\times\bar{\afrak}$ is $(n\delta)^{-1} \cdot (\afrak\times\bar{\afrak})$ (where $\delta$ is the positive imaginary square root of~$\Delta$ as in the proof of Proposition~\ref{prop:polarization}) and the product polarization from~$\afrak\times\bar{\afrak}$~to~its~dual is simply multiplication by $1/(n\delta)$. Pulling this polarization back to $\Ocal\times\Ocal$ via~$P$~gives us the polarization $(n\delta)^{-1} P^* P$. Since the product polarization on $\Ocal\times\Ocal$ is $1/\delta$, the pullback polarization is represented by the endomorphism $(1/n)P^*P$ of $\Ocal\times\Ocal$, and we compute that $(1/n) P^* P$ is the matrix given in the statement of the corollary.
\end{proof}

We close this section by indicating how we can check the criterion given in Proposition~\ref{prop:polarization}: namely, given the polarization matrix $M$ and an ideal $\afrak$ with $\Norm(\afrak) = n\ZZ$, how can we determine whether there exists a matrix $P\in M_2(\afrak)$ that satisfies $n M = P^* M P$?

Suppose there exists such a matrix $P$. If $M=\left(\begin{smallmatrix} a & b \\ \bar{b} & d \end{smallmatrix}\right)$ let us take $L = \left(\begin{smallmatrix} a & b \\ 0 & 1 \end{smallmatrix}\right)$, so that $L^*L = aM$. Let $Q = LPL^{-1}$. Then the condition $n M = P^* M P$ becomes the condition $n\Id = Q^*Q$. This equality can only hold if $Q$ is of the form
\[Q = \begin{pmatrix} x & y \\ z & t \end{pmatrix} \in GL_2(K)\] 
where $x,y,z,t \in K$ satisfy $\Norm(x) + \Norm(z) = \Norm(y) + \Norm(t) = n$ and $\bar{x}y + \bar{z}t = 0$. Since we have 
\[P = L^{-1} Q L = \begin{pmatrix} x - bz & \frac{bx + y - b^2 z - bt}{a}\\ az & bz + t \end{pmatrix} \in M_2(\afrak),\]
we see that we must have $x=X/a$, $y=Y/a$, $z=Z/a$, and $t=T/a$ with $X,Y,Z,T \in \afrak$.

Therefore, to check whether a matrix $P$ with the desired properties exists, it suffices to compute and store all solutions $\left(X,Z\right) \in \afrak\times\afrak$ to the norm equation $\Norm(X) + \Norm(Z) = a^2n$ (which can be done efficiently). Then, for every two solutions $(X,Z)$ and $(Y,T)$ satisfying $\bar{X} Y + \bar{Z} T =0$, we can check whether the corresponding matrix $P$ lies in $M_2(\afrak)$. If we obtain such a $P$ for each of the ideals~$\afrak_i$ from Proposition~\ref{prop:polarization}, then the field of moduli for $(E^2,\varphi)$ is~$\QQ$. In fact, we need only find a $P$ for each $\afrak_i$ in a set that generates the class group of~$\Ocal$.

\subsection{Results}

We have implemented the algorithms described in the previous sections. We were able to test all polarizations on the $65$ possible orders identified in Section~\ref{sec:woPol}. The results are presented in Table~\ref{tab:res}.

{\small
\begin{table}[ht]
\centering
\begin{tabular}{r@{\hbox to 0.5em{}}rrcrr@{\hbox to 0.5em{}}rrcrrrrc}
\toprule
$h$ & $\Delta$ & $\#\varphi$ & $\#C$\hbox to 0.5em{} &&
$h$ & $\Delta$ & $\#\varphi$ & $\#C$\hbox to 0.5em{} &&
$h$ & $\Delta$ & $\#\varphi$ & $\#C$\hbox to 0.5em{} \\
\cmidrule(l{0.25em}r{0.75em}){1-4}\cmidrule(l{0.25em}r{0.75em}){6-9}\cmidrule(l{0.75em}r{0.75em}){11-14}
$1$ &    $-3$ &   $0$ & $0$ && $4$ &   $-84$ &   $2$ & $0$ && $8$  &  $-420$ &  $10$ & $0$ \\
    &    $-4$ &   $0$ & $0$ &&     &  $-120$ &   $5$ & $3$ &&      &  $-660$ &  $16$ & $0$ \\
    &    $-7$ &   $0$ & $0$ &&     &  $-132$ &   $3$ & $1$ &&      &  $-840$ &  $22$ & $0$ \\
    &    $-8$ &   $1$ & $1$ &&     &  $-168$ &   $4$ & $0$ &&      & $-1092$ &  $22$ & $0$ \\
    &   $-11$ &   $1$ & $1$ &&     &  $-195$ &   $8$ & $0$ &&      & $-1155$ &  $32$ & $0$ \\
    &   $-19$ &   $1$ & $1$ &&     &  $-228$ &   $5$ & $1$ &&      & $-1320$ &  $36$ & $0$ \\
    &   $-43$ &   $2$ & $2$ &&     &  $-280$ &  $14$ & $0$ &&      & $-1380$ &  $34$ & $0$ \\
    &   $-67$ &   $3$ & $3$ &&     &  $-312$ &  $11$ & $1$ &&      & $-1428$ &  $28$ & $0$ \\
    &  $-163$ &   $7$ & $7$ &&     &  $-340$ &  $14$ & $0$ &&      & $-1540$ &  $46$ & $0$ \\
    &         &       &     &&     &  $-372$ &   $8$ & $0$ &&      & $-1848$ &  $46$ & $0$ \\
$2$ &   $-15$ &   $0$ & $0$ &&     &  $-408$ &  $14$ & $0$ &&      & $-1995$ &  $56$ & $0$ \\
    &   $-20$ &   $1$ & $1$ &&     &  $-435$ &  $16$ & $0$ &&      & $-3003$ &  $72$ & $0$ \\
    &   $-24$ &   $1$ & $1$ &&     &  $-483$ &  $12$ & $0$ &&      & $-3315$ & $128$ & $0$ \\
    &   $-35$ &   $2$ & $0$ &&     &  $-520$ &  $25$ & $3$ &&      &         &       &     \\
    &   $-40$ &   $2$ & $2$ &&     &  $-532$ &  $14$ & $0$ && $16$ & $-5460$ & $128$ & $0$ \\
    &   $-51$ &   $2$ & $0$ &&     &  $-555$ &  $20$ & $0$ &&      &         &       &     \\
    &   $-52$ &   $2$ & $2$ &&     &  $-595$ &  $28$ & $2$ &&      &         &       &     \\
    &   $-88$ &   $4$ & $2$ &&     &  $-627$ &  $16$ & $0$ &&      &         &       &     \\
    &   $-91$ &   $4$ & $0$ &&     &  $-708$ &  $15$ & $1$ &&      &         &       &     \\
    &  $-115$ &   $6$ & $0$ &&     &  $-715$ &  $36$ & $0$ &&      &         &       &     \\
    &  $-123$ &   $4$ & $0$ &&     &  $-760$ &  $41$ & $1$ &&      &         &       &     \\
    &  $-148$ &   $5$ & $3$ &&     &  $-795$ &  $28$ & $2$ &&      &         &       &     \\
    &  $-187$ &   $8$ & $0$ &&     & $-1012$ &  $28$ & $0$ &&      &         &       &     \\
    &  $-232$ &   $9$ & $5$ &&     & $-1435$ &  $64$ & $0$ &&      &         &       &     \\
    &  $-235$ &  $12$ & $0$ &&     &         &       &     &&      &         &       &     \\
    &  $-267$ &   $8$ & $0$ &&     &         &       &     &&      &         &       &     \\
    &  $-403$ &  $18$ & $0$ &&     &         &       &     &&      &         &       &     \\
    &  $-427$ &  $16$ & $0$ &&     &         &       &     &&      &         &       &     \\
\bottomrule
\end{tabular}
\bigskip
\caption{The number of indecomposable principal polarizations~ $\varphi$ and the number of isomorphism classes of curves $C$ with field of moduli $\QQ$ for each discriminant $\Delta$, grouped by class number~$h$.}
\label{tab:res}
\end{table}}

There exist $1226$ indecomposable polarizations, in total. Our algorithms, implemented in \verb?Magma? on a laptop with a 2.50 GHz Intel Core i7-4710MQ processor, took less than $21$ minutes to compute all of the polarizations; about $10$ minutes of that time was spent on the largest discriminant. The computation required about~$2.8$~GB of memory.

Once we computed the polarizations, it took about $26$ minutes (on the same laptop) to check the conditions of Proposition~\ref{prop:polarization}. For this calculation, the largest discriminant represented more than two-thirds of the computation time.

In the end, we obtained exactly $46$ polarizations $\varphi$ such that the principally polarized abelian surface $(E^2,\varphi)$ is isomorphic to the Jacobian of a curve $C$ with field of moduli $\QQ$.
These $46$ curves are obtained only from orders whose class groups have order $1$, $2$, or $4$.

\section{Computation of invariants and final remarks} \label{sec:FinRes}

\subsection{Invariants of the genus-\texorpdfstring{$2$}{2} curves \texorpdfstring{$C$}{C}} 

A genus-$2$ curve $C$ has field of moduli~$\QQ$ if and only if all of its absolute invariants are defined over $\QQ$ (see for example~\cite[\S~3]{LRS12}). This is in particular true for the triplet $(g_1,g_2,g_3)$ of invariants defined by Cardona and Quer in~\cite{CQ05}, which characterizes a genus-$2$ curve up to~$\QQbar$-isomorphism and enables one to find an equation $y^2=f(x)$ for the curve. We quickly review here a strategy for obtaining the Cardona--Quer invariants for the~$46$ curves whose invariants are $\QQ$-rational.

The first quantity we are able to derive is a Riemann matrix $\tau$, using the same method as~\cite[\S~3.3]{Rit10}. Starting with the positive definite unimodular Hermitian matrix $M$ corresponding to the polarization $\varphi = \varphi_0 \cdot M$, we obtain the Riemann matrix $\tau$ associated to $\varphi$ and the CM-elliptic curve $E \simeq \CC/(\ZZ+\ZZ \omega)$ where~\mbox{$\omega=(1+\sqrt{\Delta})/2$} if $\Delta$ is odd and $\omega=\sqrt{\Delta}$ otherwise. 

This matrix we get is defined up to the action of the symplectic group $\Sp_4(\ZZ)$. One then works out a matrix $\tau_0$ in the orbit of $\tau$ for which the computation of the theta constants $(\theta_i )_{0 \leq i \leq 9}$ at $\tau_0$ is fast (see~\cite{Lab16} for instance).

A complex model of a curve $C : y^2 = x(x-1)(x-\lambda_1)(x-\lambda_2)(x-\lambda_3)$ with Riemann matrix $\tau_0$ can then be classically approximated using Rosenhain's formulas~\cite[p.~417]{Ros51}
\[\lambda_1 = \frac{\theta_0^2\theta_2^2}{\theta_1^2\theta_3^2}, \qquad \lambda_2 = \frac{\theta_2^2\theta_7^2}{\theta_3^2\theta_9^2}, \qquad \text{and} \qquad \lambda_3 = \frac{\theta_0^2\theta_7^2}{\theta_1^2\theta_9^2}.\]

By computing the theta constants to higher and higher precision, we are able to get a sufficiently good approximation of the Cardona--Quer invariants to recognize them as rationals. The numbers we get are \emph{a priori} only heuristic as there is no bound known for the denominators of these rationals; however, we can sometimes prove that these heuristic values are correct, as follows.

Given a set of Cardona--Quer invariants that we suspect are equal to the invariants of a curve whose Jacobian is isomorphic to $E^2$ for an $E$ with complex multiplication, we can easily produce a curve $C$ having those invariants. Then we can use the techniques of~\cite{CMSV17} to provably compute the endomorphism ring of the Jacobian of $C$. If this endomorphism ring is isomorphic to the ring $M_2(\End E)$, then we have provably found a curve of the type we are looking for.

We computed heuristic values for the Cardona--Quer invariants of our $46$ principally polarized abelian surfaces, and the list of these invariants is available on authors' web pages, together with all the programs to compute them. We are grateful to J.~Sijsling for computing the endomorphism rings for the Jacobians of~$13$ of our $46$ curves; he is currently developing a faster and more robust algorithm which should be able to handle the remaining cases. For each of these $13$ curves, the endomorphism ring was $M_2(\End E)$, so the heuristic values of the Cardona--Quer invariants of these curves are provably correct.

We observe for that the $13$ provably-correct sets of invariants, all the denominators are smooth integers. It would be very interesting, in the same spirit as~\cite{GL07,LV15} for the CM genus-$2$ case, to find formulas to explain the prime powers dividing these denominators. An example of such a closed formula appears in the introduction of~\cite{Rod00} without any details. The denominators of the $33$ sets of invariants that we have not proven to be correct also are smooth, which provides some further heuristic evidence that the values are correct.

We present in Table~\ref{tab:curves} the invariants for a few of the curves we could provably compute.

{\small
\begin{table}[ht]
\centering
\begin{tabular}{r@{\qquad}cc}
\toprule
 $\Delta$ & $M$ & Cardona--Quer invariants $[g_1,g_2,g_3]$ \\
 \midrule
$ -8$ & $\begin{pmatrix} 2 & \omega + 1 \\ -\omega + 1 & 2 \end{pmatrix}$ & $\left[2^4\cdot 5^5,2\cdot 3\cdot 5^4,-5^3\right]$ \\[3ex]
$-11$ & $\begin{pmatrix} 2 & \omega     \\ -\omega + 1 & 2 \end{pmatrix}$ & $\displaystyle\left[\frac{19^5}{2^2},\frac{3^2\cdot 11\cdot 19^3}{2^5},-\frac{19^2\cdot 47}{2^6}\right]$ \\[3ex]
$-19$ & $\begin{pmatrix} 2 & \omega     \\ -\omega + 1 & 3 \end{pmatrix}$ & $\displaystyle\left[\frac{5^5\cdot 29^5}{2^2\cdot 3^7},\frac{5^3\cdot 7\cdot 29^3\cdot 31\cdot 73}{2^5\cdot 3^8},-\frac{5^2\cdot 17\cdot 29^2\cdot 2719}{2^6\cdot 3^{10}}\right]$ \\[3ex]
$-20$ & $\begin{pmatrix} 2 & \omega     \\ -\omega     & 3 \end{pmatrix}$ & $\displaystyle\left[\frac{5^5\cdot 7^5}{2^2},\frac{5^5\cdot 7^3\cdot 11}{2^5},-\frac{3\cdot 5^3\cdot 7^2}{2^6}\right]$ \\[3ex]
$-24$ & $\begin{pmatrix} 2 & \omega + 1 \\ -\omega + 1 & 4 \end{pmatrix}$ & $\displaystyle\left[\frac{2^4\cdot 23^5}{3},\frac{2\cdot 23^3\cdot 421}{3^2},-\frac{23^2\cdot 37}{3^4}\right]$ \\[3ex]
$-40$ & $\begin{pmatrix} 2 & \omega + 1 \\ -\omega + 1 & 6 \end{pmatrix}$ & $\displaystyle\left[\frac{2^4\cdot 5^5\cdot 43^5}{3^7},\frac{2\cdot 5^4\cdot 43^3\cdot 6977}{3^8},-\frac{5^4\cdot 13\cdot 43^2}{3^{10}}\right]$ \\[3ex]
$-52$ & $\begin{pmatrix} 2 & \omega     \\ -\omega     & 7 \end{pmatrix}$ & $\displaystyle\left[\frac{5^5\cdot 173^5}{2^2\cdot 3^7},\frac{5^4\cdot 173^3\cdot 112061}{2^5\cdot 3^ 8},-\frac{5^3\cdot 7\cdot 37\cdot 173^2}{2^6\cdot 3^{10}}\right]$ \\[3ex]
\bottomrule
\end{tabular}
\bigskip
\caption{Cardona--Quer invariants for seven of the $46$ genus-$2$ curves with field of moduli $\QQ$ whose Jacobians are isomorphic to~$E^2$, where $E$ has CM by a maximal order~$\Ocal$. The discriminant of $\Ocal$ is $\Delta$, the corresponding principal polarization on $E^2$ is~$\varphi_0\cdot M$, and $\omega$ denotes either $\sqrt{\Delta}/2$ or $(1 + \sqrt{\Delta})/2$, depending on whether $\Delta$ is even or odd.}
\label{tab:curves}
\end{table}}

\subsection{When is \texorpdfstring{$\QQ$}{Q} also a field of definition for \texorpdfstring{$C$}{C}?}
 
To conclude let us consider any of the $46$ previous pairs $(A,\varphi)$. We know that there exists a genus-$2$ curve $C/\QQbar$ with field of moduli $\QQ$ such that $(\Jac(C),j) \simeq_{\QQbar} (A,\varphi)$, where $j$ is the canonical polarization on $\Jac(C)$. If the order of $\Aut(A,\varphi) \simeq \Aut(C)$ is larger than~$2$, then it is known~\cite{CQ05} that the field of moduli of $C$ is a field of definition and that there exists a genus-$2$ curve $C_0\colon y^2=f(x)$ with $f \in \QQ[x]$ such that \mbox{$(\Jac(C_0),j_0) \simeq_{\QQbar} (A,\varphi)$}. In particular $\QQ$ is also a field of definition for $(A,\varphi)$.

\begin{proposition}[{Compare to~\cite[\S~4]{Rod00}}] \label{prop:fielddef}
The field $\QQ$ is a field of definition of~$C$ --- and therefore of $(A,\varphi)$~--- if and only if
the order of \mbox{$\Aut(A,\varphi)\simeq \Aut(C)$} is larger than~$2$.
\end{proposition}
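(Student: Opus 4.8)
The plan is to prove the two implications separately, dispatching the forward direction by citation and reducing the converse to a local computation. If $\#\Aut(A,\varphi)>2$, then $\QQ$ is a field of definition: this is precisely the Cardona--Quer result recalled just before the statement \cite{CQ05}, which moreover produces a model $C_0\colon y^2=f(x)$ with $f\in\QQ[x]$, and $\Jac(C_0)$ is then a $\QQ$-model of $(A,\varphi)$. Conversely, if $\QQ$ is a field of definition of $C$, then $(A,\varphi)=\Jac(C)$ descends to $\QQ$ as the Jacobian of the $\QQ$-model of $C$; so it suffices to prove that when $\#\Aut(A,\varphi)=2$ the surface $(A,\varphi)$ has no $\QQ$-model --- in fact none over $\RR$ --- which simultaneously rules out a $\QQ$-model of $C$.

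To set up the obstruction I would use that, via Torelli's theorem \cite{LS01}, $\Aut(C)\simeq\Aut(A,\varphi)$ always contains $\{\pm1\}$, with $-1$ corresponding to the hyperelliptic involution; thus $\#\Aut(A,\varphi)=2$ means $\Aut(A,\varphi)=\{\pm1\}$, a central subgroup of order~$2$. Since the field of moduli is $\QQ$, for each $\sigma\in\Gal(\QQbar/\QQ)$ there is an isomorphism $f_\sigma\colon(A,\varphi)^\sigma\to(A,\varphi)$, and the failure of $(f_\sigma)$ to satisfy Weil's cocycle condition defines a class in $H^2(\Gal(\QQbar/\QQ),\{\pm1\})\hookrightarrow\mathrm{Br}(\QQ)[2]$, that is, a quaternion algebra over $\QQ$; replacing each $f_\sigma$ by $\pm f_\sigma$ alters the class only by a coboundary, so it is well defined and its vanishing is equivalent to descent. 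It then suffices to show this quaternion algebra is ramified at the real place, which also explains the parenthetical comparison with \cite{Rod00}.

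The crux is a one-line matrix computation at $\infty$. Let $\iota$ be complex conjugation; because the class group has exponent at most~$2$ we may, exactly as in the proof of Lemma~\ref{lemma:KvsQ}, choose $E$ over the totally real field $\QQ(j(E))$, so that $A^\iota=A$ and $\varphi^\iota$ corresponds to $\bar{M}$ under Proposition~\ref{prop:matM}. By Lemma~\ref{lemma:KvsQ} a real-place descent datum is a matrix $P\in\GL_2(\Ocal)$ with $P^*MP=\bar{M}$, for example $P=\left(\begin{smallmatrix}b&d\\-a&-\bar{b}\end{smallmatrix}\right)$ when $M=\left(\begin{smallmatrix}a&b\\\bar{b}&d\end{smallmatrix}\right)$, and the local obstruction is the automorphism $f_\iota\circ f_\iota^{\iota}$ of $(A,\varphi)$, which on matrices is $P\bar{P}$. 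A direct multiplication gives $P\bar{P}=(|b|^2-ad)\,\Id=-\det(M)\,\Id=-\Id$, using that $M$ is positive definite and unimodular. As $\Aut(A,\varphi)=\{\pm1\}$, every admissible $P$ is $\pm P$ and gives the same value $-\Id$; hence $f_\iota\circ f_\iota^{\iota}=-1$, the class is the nontrivial element at $\infty$, and $(A,\varphi)$ has no model over $\RR$, a fortiori none over $\QQ$.

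The main obstacle is bookkeeping rather than ingenuity: I must verify that the semilinear composition $f_\iota\circ f_\iota^{\iota}$ is genuinely represented by $P\bar{P}$ and not by $\bar{P}P$ or $P^2$, by tracking the direction of each arrow $(A,\varphi_0\bar{M})\to(A,\varphi_0 M)\to(A,\varphi_0\bar{M})$ through the dictionary of Proposition~\ref{prop:matM} that identifies $\Aut(A,\varphi)$ with $\{P:P^*MP=M\}$ and that fixes the conjugate-transpose conventions. Once these are pinned down, unimodularity forces $P\bar{P}=-\Id$, and the independence of the choice of $P$ is immediate from $\Aut(A,\varphi)=\{\pm1\}$. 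This is exactly where the hypothesis $\#\Aut=2$ enters: a larger automorphism group would provide extra freedom $f_\iota\mapsto g f_\iota$ that can cancel the sign, which is what makes the forward direction of \cite{CQ05} possible and keeps the two implications consistent.
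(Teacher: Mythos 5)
Your proof is correct and follows essentially the same route as the paper's: the forward direction is the quoted Cardona--Quer result, and the converse is reduced to the real place, where the key computation $P\bar{P}=(|b|^2-ad)\Id=-\Id$ for the explicit $P$ satisfying $P^*MP=\bar{M}$, combined with $\Aut(A,\varphi)=\{\pm 1\}$ to show every admissible $P$ gives the same sign, rules out any model over $\RR$ and hence over $\QQ$. The only cosmetic difference is that you package the obstruction as a class in $H^2\big(\Gal(\QQbar/\QQ),\{\pm 1\}\big)$ and restrict it to the infinite place, whereas the paper argues directly that a hypothetical $\RR$-model would force $\bar{P}P=\Id$, which the same computation contradicts.
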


\begin{proof}
It remains to prove that when $\Aut(A,\varphi) = \{\pm 1 \}$, there is no model of~$(A,\varphi)$ over $\QQ$. Actually we show there is even no model $(B,\mu)$ over $\RR$. Indeed, an isomorphism $\psi : (A,\varphi)/\CC \to (B,\mu)/\RR$, defined over~$\CC$, would induce an isomorphism \[\alpha_{\iota} = (\psi^{-1})^{\iota} \circ \psi : (A,\varphi) \to (A,\varphi)^{\iota},\] for the complex conjugation $\iota$, such that ${\alpha_{\iota}}^{\iota} \circ \alpha_{\iota} = \left((\psi^{-1}) \circ \psi^{\iota}\right) \circ \left((\psi^{-1})^{\iota} \circ \psi\right) = \Id$.

Since we have seen that $E^{\iota} = E$, the isomorphism $\alpha_{\iota}$ can be represented as a matrix $P \in \GL_2(\Ocal)$ such that $\bar{P} P = \Id$. Moreover the commutativity of the diagram 
\[\xymatrix{E^2 \ar[rr]^{\varphi} \ar[d]_{\alpha_{\iota}} && \widehat{E}^2 \\E^2 \ar[rr]^{\varphi^\iota} && \widehat{E}^2\ar[u]_{ \widehat{\alpha}_{\iota}}.\\}\]
translates into the equality $P^* \bar{M} P = M$. If we denote $M=\left(\begin{smallmatrix} a & b \\ \bar{b} & d \end{smallmatrix}\right)$, then it is easy to see that the matrix $P_0= \big(\begin{smallmatrix} \bar{b} & d \\ -a & -b \end{smallmatrix}\big)$ satisfies the last equality. Any other $P=P_0 R$ differs from $P_0$ by an automorphism $R$ of $(A,\varphi)$ since \mbox{$R^* P^* \bar{M} P R= R^* M R = M$}.
Because the automorphism group of $(A,\varphi)$ is $\{\pm 1\}$, this means that the only possible $P$ are $\pm P_0$. It is easy to check that $P_0\bar{P_0} = (-P_0)(-\bar{P_0}) = -\Id$, so the condition~$\bar{P} P = \Id$ cannot be satisfied.
\end{proof}

\subsection{Provably correct equations for the curves defined over \texorpdfstring{$\QQ$}{Q}} 

Using Proposition~\ref{prop:fielddef} we found that exactly $13$ of our curves can be defined over~$\QQ$, and these~$13$ are precisely the curves for which we could provably compute the invariants. This is no coincidence, as having an equation over $\QQ$ definitely simplifies the computation. We present these curves in~Table~\ref{tab:resQ}.

{\small
\begin{table}[ht]
\centering
\begin{tabular}{cc@{\ }c@{\ }l}
\toprule
     $\Delta$ & $M$ &$d$ &\quad Equation for $C$\\
\midrule      
  $-8$ & $\begin{pmatrix} 2 & \omega + 1 \\ -\omega + 1 &  2 \end{pmatrix}$ & $1$            & \begin{tabular}{l}{$y^2 = x^5 + x$}\end{tabular}\\[3ex]
 $-11$ & $\begin{pmatrix} 2 & \omega     \\ -\omega + 1 &  2 \end{pmatrix}$ & $(-11)^{1/3}$  & \begin{tabular}{l}{$y^2 = 2 x^6 + 11 x^3 - 2\cdot11$}\end{tabular}\\[3ex]
 $-19$ & $\begin{pmatrix} 2 & \omega     \\ -\omega + 1 &  3 \end{pmatrix}$ & $-19$          & \begin{tabular}{l}{$y^2 = x^6 + 1026 x^5 + 627 x^4 + 38988 x^3$}\\
                                                                                                                 {$\phantom{y^2 = x^6} - 627\cdot19 x^2 + 1026\cdot19^2 x - 19^3$}\end{tabular}\\[3ex]
 $-43$ & $\begin{pmatrix} 2 & \omega     \\ -\omega + 1 &  6 \end{pmatrix}$ & $-43$          & \begin{tabular}{l}{$y^2 = x^6 + 48762 x^5 + 1419 x^4 + 4193532 x^3$}\\
                                                                                                                 {$\phantom{y^2 = x^6} - 1419\cdot43 x^2 + 48762\cdot43^2 x - 43^3$}\end{tabular}\\[3ex]
 $-67$ & $\begin{pmatrix} 2 & \omega     \\ -\omega + 1 &  9 \end{pmatrix}$ & $-67$          & \begin{tabular}{l}{$y^2 = x^6 + 785106 x^5 + 2211 x^4 + 105204204 x^3$}\\
                                                                                                                 {$\phantom{y^2 = x^6} - 2211\cdot67 x^2 + 785106\cdot67^2 x - 67^3$}\end{tabular}\\[3ex]
$-163$ & $\begin{pmatrix} 2 & \omega     \\ -\omega + 1 & 21 \end{pmatrix}$ & $-163$         & \begin{tabular}{l}{$y^2 = x^6 + 1635420402 x^5 + 5379 x^4$}\\
                                                                                                                 {$\phantom{y^2 = x^6} + 533147051052 x^3 - 5379\cdot163 x^2$}\\
                                                                                                                 {$\phantom{y^2 = x^6} + 1635420402\cdot163^2 x - 163^3$}\end{tabular}\\[3ex]
 $-20$ & $\begin{pmatrix} 2 & \omega     \\ -\omega     &  3 \end{pmatrix}$ & $\sqrt{5}$     & \begin{tabular}{l}{$y^2 = x^5 + 5 x^3 + 5 x$}\end{tabular}\\[3ex]
 $-24$ & $\begin{pmatrix} 2 & \omega + 1 \\ -\omega + 1 &  4 \end{pmatrix}$ & $\sqrt{2}$     & \begin{tabular}{l}{$y^2 = 3 x^5 + 8 x^3 + 3\cdot2 x$}\end{tabular}\\[3ex]
 $-40$ & $\begin{pmatrix} 2 & \omega + 1 \\ -\omega + 1 &  6 \end{pmatrix}$ & $\sqrt{5}$     & \begin{tabular}{l}{$y^2 = 9 x^5 + 40 x^3 + 9\cdot5 x$}\end{tabular}\\[3ex]
 $-52$ & $\begin{pmatrix} 2 & \omega     \\ -\omega     &  7 \end{pmatrix}$ & $\sqrt{13}$    & \begin{tabular}{l}{$y^2 = 9 x^5 + 65 x^3 + 9\cdot13 x$}\end{tabular}\\[3ex]
 $-88$ & $\begin{pmatrix} 2 & \omega + 1 \\ -\omega + 1 & 12 \end{pmatrix}$ & $\sqrt{2}$     & \begin{tabular}{l}{$y^2 = 99 x^5 + 280 x^3 + 99\cdot2 x$}\end{tabular}\\[3ex]
$-148$ & $\begin{pmatrix} 2 & \omega     \\ -\omega     & 19 \end{pmatrix}$ & $\sqrt{37}$    & \begin{tabular}{l}{$y^2 = 441 x^5 + 5365 x^3 + 441\cdot37 x$}\end{tabular}\\[3ex]
$-232$ & $\begin{pmatrix} 2 & \omega + 1 \\ -\omega + 1 & 30 \end{pmatrix}$ & $\sqrt{29}$    & \begin{tabular}{l}{$y^2 = 9801 x^5 + 105560 x^3 + 9801\cdot29 x$}\end{tabular}\\
\bottomrule
\end{tabular}
\bigskip
\caption{Genus-$2$ curves defined over $\QQ$ with Jacobian isomorphic over $\QQbar$ to $E^2$, where $E$ has CM by a maximal order $\Ocal$. The discriminant of $\Ocal$ is $\Delta$, the corresponding principal polarization on~$E^2$ is $\varphi_0\cdot M$, and $\omega$ denotes either $\sqrt{\Delta}/2$ or $(1 + \sqrt{\Delta})/2$, depending on whether $\Delta$ is even or odd. This list is complete if the Generalized Riemann Hypothesis holds. Each curve is a double cover of its corresponding $E$ (as can be seen by the fact that the upper-left entry of each polarization matrix is~$2$), and the associated involution of $C$ is given by $(x,y)\mapsto (d/x, d^{3/2} y/x^3)$ for the value of $d$ given in the third column.}
\label{tab:resQ}
\end{table}}

\bibliographystyle{alphadoi}
\bibliography{GelinEtAl}
\end{document}